\newtheorem{assumption}{Assumption}
\def\qed{ \ \vrule width.2cm height.2cm depth0cm\smallskip}
\newenvironment{proof}{\noindent {\bf Proof.\/}}{$\qed$\vskip 0.1in}
\newcommand{\ba}{\begin{array}}
\newcommand{\ea}{\end{array}}
\newcommand{\be}{\begin{equation}}
\newcommand{\ee}{\end{equation}}
\newcommand{\bea}{\begin{eqnarray}}
\newcommand{\eea}{\end{eqnarray}}
\newcommand{\beaa}{\begin{eqnarray*}}
\newcommand{\eeaa}{\end{eqnarray*}}
\def\dbE{\mathbb{E}}
\def\dbF{\mathbb{F}}
\def\dbL{\mathbb{L}}
\def\dbN{\mathbb{N}}
\def\dbP{\mathbb{P}}
\def\dbR{\mathbb{R}}
\def\dbV{\mathbb{V}}
\def\b{\beta}
\def\d{\delta}
\def\e{\varepsilon}
\def\si{\sigma}
\def\t{\tau}
\def\th{\theta}
\def\G{\Gamma}
\def\D{\Delta}
\def\Th{\Theta}
\def\cA{{\cal A}}
\def\cE{{\cal E}}
\def\cF{{\cal F}}
\def\cJ{{\cal J}}
\def\cL{{\cal L}}
\def\cM{{\cal M}}
\def\cP{{\cal P}}
\def\cX{{\cal X}}
\def\cY{{\cal Y}}
\def\cZ{{\cal Z}}
\def\no{\noindent}
\def\ms{\medskip}
\def\bs{\bigskip}
\def\q{\quad}
\def\qq{\qquad}
\def\pa{\partial}
\def\cd{\cdot}
\def\cds{\cdots}
\def\qed{ \hfill \vrule width.25cm height.25cm depth0cm\smallskip}
\newcommand{\basa}{\begin{assumption}}
\newcommand{\easa}{\end{assumption}}
\newcommand{\bas}{\begin{assum}}
\newcommand{\eas}{\end{assum}}
\def\pa{\partial}
 \def\cd{\cdot}
\def\cds{\cdots}
\def\dis{\displaystyle}
\def\1{{\bf 1}}
\def\:{\!:\!}
\def\reff#1{{\rm(\ref{#1})}}
\def \proof{{\noindent \bf Proof\quad}}
\newtheorem{thm}{Theorem}[section]
\newtheorem{prop}[thm]{Proposition}
\newtheorem{rem}[thm]{Remark}
\newtheorem{eg}[thm]{Example}
\newtheorem{defn}[thm]{Definition}
\newtheorem{assum}[thm]{Assumption}
\numberwithin{equation}{section}
\DeclareMathOperator*{\argmax}{arg\,max}
\begin{document}

\title{\bf A New Approach for the Continuous Time  Kyle-Back Strategic Insider Equilibrium Problem}
\author{, Jianfeng Zhang}

\author{
Bixing Qiao\thanks{\noindent USC Mathematics Department, Los Angeles, 90089; email: bqiao@usc.edu.}
 ~ and ~ Jianfeng Zhang \thanks{ \noindent USC Mathematics Department, Los Angeles, 90089;
email: jianfenz@usc.edu. This author is supported in part by NSF grant  \#DMS-2205972. }}

%\date{}
\date{\today}
\maketitle

\begin{abstract}
This paper considers a continuous time Kyle-Back model which is a game problem between an insider and a market marker. The existing literature typically focuses on the existence of equilibrium by using the PDE approach, which requires certain Markovian structure and the equilibrium is in the bridge form. We shall provide a new approach which is used widely for stochastic controls and stochastic differential games. We characterize all equilibria through a coupled system of forward backward SDEs, where the forward one is the conditional law of the inside information and the backward one is the insider's optimal value. In particular, when the time duration is small, we show that the FBSDE is wellposed and thus the game has a unique equilibrium. This is the first uniqueness result  in the literature, without restricting the equilibria to certain special structure. Moreover, this unique equilibrium may not be Markovian, indicating that the PDE approach cannot work in this case. We next study the set value of the game, which roughly speaking is the set of insider's values over all equilibria and thus is by nature unique. We show that, although the bridge type of equilibria in the literature does not satisfy the required integrability for our equilibria, its truncation serves as a desired approximate equilibrium and its value belongs to our set value. Finally, we characterize our set value through a level set of certain standard HJB equation. 
 \end{abstract}

 \bs
\no{\bf Keywords.} Kyle-Back model, insider trading, Nash equilibrium, forward backward SDE, set value

\bs

\no{\it 2020 AMS Mathematics subject classification:} 91A27, 91G15,  60H10

\vfill\eject
\section{Introduction} \label{sect-introduction}
The Kyle-Back model considers a financial market with three types of agents: 1) A representative {\it noise trader} who is non-strategic and provides liquidity; 2) an {\it insider} who observes the true value of the asset, which is the inside information, and chooses certain trading strategy to maximize his expected gain; % minus certain cost which could be due to the legal risk;
 and 3)  a {\it market maker} who observes the total trading volume of the noise trader and the insider, but not the inside information, and chooses a pricing rule to determine the market price of the asset for break-even. This is a game  between the insider and the market marker. Such a model, as its name suggests, was initiated by Kyle  \cite{Kyle1985} (mainly) on discrete time models and Back \cite{Back1992} on continuous time models. The Kyle-Back model and its various variations have been extensively studied in  the mathematical finance literature. For continuous time models, we refer to Aase-Bjuland-Øksendal  \cite{Aase2012}, Back-Baruch \cite{Back2004}, Back-Cocquemas-Ekren-Lioui \cite{back2021}, Back-Pedersen \cite{BackPedersen}, Biagini-Hu-Meyer-Brandis-{\O}ksendal \cite{BHMO}, Bose-Ekren \cite{Bose2023, Bose2024}, Caldentey-Stacchetti  \cite{Caldentey2010}, Campi-\c{C}etin-Danilova \cite{CAMPI2011},  \c{C}etin \cite{Cetin2023, Cetin2023b},  \c{C}etin-Danilova \cite{Cetin2016, CD2}, \c{C}etin-Xing \cite{CetinXing2013}, Cho \cite{Cho2003}, Choi-Kwon-Kasper  \cite{Kasper2023}, Collin-Fos \cite{CollinFos2016}, Danilova \cite{Danilova2010}, Ekren-Mostowsk-Žitković \cite{ekren2022}, Lasserre \cite{Lasserre2004}, Ma-Sun-Zhou \cite{MSZ}, Ma-Tan \cite{MaTan2022}, and the references therein. In particular, we note that  \cite{back2021, Bose2023, Bose2024} proposed an optimal transport approach  for the problem and \cite{Cetin2023, Cetin2023b} studied the model with legal risk.

The existing literature mainly focuses on the existence of equilibrium by using the PDE approach. This approach requires certain Markovian structure of the price process under some reference measure. Moreover, the constructed equilibrium typically has the bridge structure, and is often not square integrable. While there have been many deep and highly technical results, the above features make it hard to build a general theory. In particular, the existence in a general model remains a challenging problem, and to the best of our knowledge, there are very few uniqueness results and all of them restrict the pricing rule to some special structure, e.g. Markovian structure in \cite{Back1992} and linear structure in \cite{Cho2003}. 

In this paper we propose to use the weak formulation through the Girsanov theorem to formulate the Kyle-Back model, with possibly a legal risk or cost. This is more or less a standard formulation  for stochastic controls and stochastic differential games, but seems new in the literature of Kyle-Back model. Thus we provide a unified approach for the Kyle-Back model, which is by nature a game, and the standard nonzero-sum games. This also enables us to use many ideas and technical tools from the stochastic differential game literature. Moreover, our approach does not require any more the Markovian structure of the price process, and thus helps for the existence of equilibrium. We shall note that, in order to apply the Girsanov theorem, we will require stronger integrability condition on the insider's admissible strategies, and for simplicity we will actually assume they are bounded. Other than this technical difference, our problem is equivalent to the standard models studied in the literature.  

Our first result is a full characterization of the equilibria in the Kyle-Back model thorough a coupled system of Forward Backward SDEs (FBSDEs, for short), under natural technical conditions. That is, the equilibria of the game have one to one correspondence with the solutions of the FBSDE. The forward component of our FBSDE is the conditional law of the inside information, conditional on the total trading volume which is observable to the market marker, and the backward component is the insider's optimal value. This mimics the structure of the mean field game systems, see e.g. \cite{CD1}, and it also provides intrinsic connection with the optimal transport approach proposed by \cite{back2021, Bose2023, Bose2024}. We shall explore these interesting connections in future research.   

When the time duration is small, we show that the FBSDE is wellposed, and thus the game has a unique equilibrium. To the best of our knowledge, this is the first uniqueness result for continuous time Kyle-Back model, without restricting the equilibria to certain special structure. Moreover, when the inside information is Bernoulli, namely taking only two values, the equilibrium price process is indeed Markovian under the appropriate reference measure. However, such Markov property often fails when the inside information has three or more possible values. Since the equilibrium here is unique, this implies that no equilibrium would satisfy the Makov property required in the standard literature and consequently this equilibrium is out of reach of the PDE approach. So our existence result is also new in this case, demonstrating further the power of our new approach.  
 
 We next study the set value of the game, which was initiated by Feistein-Rudloff-Zhang \cite{FRZ2021} for finitely many player games and Iseri-Zhang \cite{IseriZhang} for mean field games. Roughly speaking, the set value of a game is the set of values over all equilibria. However, we do not restrict to true equilibria, which in many cases could be a tall order, but consider asymptotic values over approximate equilibria. By considering the empty set as a possible set value, both the existence and the uniqueness of the set value are guaranteed by definition. We emphasize that, instead of focusing on constructing one equilibrium as in the literature, we investigate {\it all} equilibria and their values.  

We recall again that the bridge type of equilibria found in the literature typically do not satisfy the integrability/boundedness requirement, and thus cannot be viewed as a true equilibrium in our setting. However, by using Back 
\cite{Back1992}'s original model as an example, we show that a truncated version of this equilibrium serves as a desired approximate equilibrium in our sense, and the value of this equilibrium indeed belongs to the set value. So the new approach here still covers the classical results, but in terms of values instead of equilibria. 
 
 Finally, inspired by \cite{FRZ2021}, we provide a level set approach to characterize the set value. To be precise, we show that the set value is the zero level set of an auxiliary control problem.  This characterization can be viewed as a duality result, and could be potentially useful for numerical computation of the set value. 

The rest of the paper is organized as follows. In Section \ref{sect-model} we introduce the Kyle-Back model, both the standard formulation and our weak formulation. In Section \ref{sect-FBSDE} we characterize all equilibria with an FBSDE. In Section \ref{sect-smallT} we establish the wellposedness of the FBSDE when time duration is small. In Section \ref{sect-setvalue} we introduce the set value of the game and show through an example that the value of the bridge type of equilibria in the existing literature belongs to our set value. In Section \ref{sect-levelset} we establish the duality between the set value and the zero level set of the auxiliary control problem. Finally in Appendix we complete some proofs.

\section{The Kyle-Back model}
\label{sect-model}

\subsection{The standard formulation}
\label{sect-standardsetting}
Fix a filtered probability space $(\Omega, \dbF, \dbP)$. The Kyle-Back model concerns a financial asset on $[0, T]$ with fundamental value $V$ at $T$ and market price $S_t$, $0\le t\le T$.  It involves three types of agents:
\begin{itemize}
\item A {\it  representative noise trader}, who is non-strategic and places an accumulative order $B_t$ on $[0, t]$, which is modeled   as a  Brownian motion due to the central limit theorem. 

\item An {\it  inside trader}, or simply {\it insider}, who observes the value $V$ at initial time $0$ and the market price $S$ dynamically, and place an accumulative order $\Th_t = \Th(V; t, S_{[0, t]})$. 

\item {\it A market maker}, who observes the total accumulative order $Q_t := \Th_t + B_t$, but not the values $\Th, B$, or $V$, separately, and chooses a pricing rule $P$ to determine the market price: $S_t = P(t, Q_{[0, t]})$. The market marker has a prior distribution $\mu$ for $V$.
\end{itemize}

\no Notice that $\Th_t$ can be rewritten further through the pricing rule $P$ as a function of $(V; t, Q_{[0, t]})$. For technical reasons in this paper we shall assume $\Th$ takes this form directly:
\bea
\label{ThQ}
\Th_t = \Th(V; t, Q_{[0, t]}).
\eea
This can be justified by either assuming the market marker's pricing rule $P$ is one to one between the paths of $Q$ and $S$, as quite often the case in the literature, or by assuming that the insider also observes $Q$ (or equivalently $B$, since the insider knows $\Th$). Moreover, in this paper we shall assume $\Th$ is absolutely continuous in $t$:
\bea
\label{Thth}
\Th_t = \int_0^t \th(V; s, Q_{[0, s]})ds.
\eea
Then $Q = Q^{\th(V;\cd)}$, where for each $\th'=\th'(t, Q_{[0, t]})$, $Q^{\th'}$ is determined by $\th'$ as follows:
\bea
\label{Qth}
Q^{\th'}_t = \int_0^t \th'(s, Q^{\th'}_{[0,s]}) ds + B_t.
\eea
Here $\th'$ corresponds to $\th^v:=\th(v; \cd)$ for $V=v$, and we assume the above has a unique solution in appropriate sense. 

Given $V=v$ and a pricing rule $P$, the insider aims to maximize his expected gain:
\bea
\label{JPth0}
\sup_{\th'} J_0(P; v, \th'),\q\mbox{where}\q  J_0(P; v, \th') := \dbE\Big[\int_0^T [ v- P(t, Q^{\th'}_{[0, t]})] \th'(t,  Q^{\th'}_{[0, t]})dt\Big].
\eea
On the other hand, given $\th$, the market maker sets a rational pricing rule $P^\th$ for break-even:
\bea
\label{Pth0}
P^\th(t, Q^{\th(V;\cd)}_{[0, t]}) := \dbE\big[V |\cF^{Q^{\th(V;\cd)}}_t\big],\q 0\le t\le T,~\dbP\mbox{-a.s.}
\eea
This can be justified by a Bertrand competition argument, see e.g. \cite{Kyle1985}. 
\begin{defn}
\label{defn-NE0}
We say a pair $(P^*, \th^*)$ is an equilibrium of the model \reff{JPth0}-\reff{Pth0} if $P^* = P^{\th^*}$ as in \reff{Pth0}, and $\th^*(v; \cd) = \mbox{argmax}_{\th'} ~ J_0(P^*; v, \th')$ as in \reff{JPth0}, for all $v\in \mbox{Supp}(V)$.
\end{defn}

\begin{rem}
\label{rem-game}
(i) The market maker's problem \reff{Pth0} can be alternatively written as an optimization problem: 
\bea
\label{Pth2}
P^\th := \mbox{argmax}_P J_1(\th; P),\q \mbox{where}\q J_1(\th; P):= -\dbE\Big[\big|V-P(t, Q^{\th(V;\cd)}_{[0, t]}) \big|^2\Big].
\eea
Then $(P^*, \th^*)$ is the standard Nash equilibrium of the following nonzero sum game:
\bea
\label{game1}
J_0(P^*; v, \th^{*, v}) \ge J_0(P^*; v, \th'),~\forall \th', v;\q \mbox{and}\q J_1(\th^*; P^*) \ge J_1(\th^*; P),~ \forall P.
\eea

(ii) Assuming the market marker has a target strategy $\th$, namely he would use the pricing rule $P^\th$, the insider could choose a different strategy $\tilde \th$. Introduce:
\bea
\label{Jthth}
\cJ_0(\th, \tilde \th) := \int_\dbR J_0(P^\th; v, \tilde \th^v)\mu(dv),\q \cJ_1(\th, \tilde\th) := J_1(\tilde\th; \dbP^\th).
\eea
Then $\th^*$ is an equilibrium (with corresponding $P^* = P^{\th^*})$ if only if $(\th^*, \th^*)$ is the Nash equilibrium of the following standard nonzero sum game:
\bea
\label{Game2}
\cJ_0(\th^*, \th^*)\ge \cJ_0(\th^*,  \tilde \th),~ \forall \tilde \th;\q\mbox{and}\q \cJ_1(\th^*, \th^*)\ge \cJ_1(\th,  \th^*),~ \forall \th.
\eea

(iii) The game problem \reff{Game2} is fundamentally different from the control problem:
\bea
\label{control}
\sup_\th \cJ_0(\th, \th),
\eea
which is studied, for example, in \cite{MSZ, MaTan2022}. We remark that, since $P^\th = \argmax_P J_1(\th; P)$, we may view \reff{control} as a Stackelberg game, where the insider is the leader and the market marker is the follower.
\end{rem}

\begin{rem}
\label{rem-Kyleexample}
(i) Assume $T=1$ and $V\sim N(0, 1)$. Back \cite{Back1992} shows that $P^*(t, Q_{[0, t]}) = Q_t$ is an equilibrium, with corresponding $\th^*$ takes the following form:
\bea
\label{bridge1}
\th^*(v; t, Q_{[0,t]}) = {v - Q_t \over 1-t}, \q 0\le t<1,
\eea
 and, for each $v\in \dbR$, the SDE \reff{Qth} with $\th'=\th^*(v;\cd)$  has a unique solution $Q^v$:
\bea
\label{bridge2}
Q^v_t = vt + (1-t)\int_0^t {dB_s\over 1-s},\q \th^{*v}_t := \th^*(v; t, Q^v_{[0,t]}) =v -\int_0^t {dB_s\over 1-s},\q 0\le t<1.
\eea
For completeness, we shall provide a proof of these results in Appendix.\footnote{The proof in \cite{Back1992} relies on a general result by using PDE arguments. Our proof below is specifically for this example,  which is easier and seems new.}  
 We remark that $Q^v$ is a Brownian bridge, and it is clear that $\dbE\big[\int_0^T |\th^{*v}_t|^2dt\big]=\infty$.

(ii)  Under the equilibrium $(P^*, \th^*)$, by \reff{bridge2} we have $S_t = Q^V_t = Vt +  (1-t)\int_0^t {dB_s\over 1-s}$, and $dS_t = \th^{*V}_t dt + dB_t$. Since $S_0=Q^V_0=0$, it is clear that $S$ is Markovian under the equivalent measure $\tilde \dbP$ obtained from applying the Girsanov theorem. Such Markov property is crucial for the PDE approach in the existing literature.

(iii) The above equilibrium is unique among all equilibria $(P^*, \th^*)$ such that either $P^*$ is Markovian as in (ii), cf. \cite{Back1992},  or $P^*$ is linear in $Q$, cf. \cite{Cho2003}. However, the uniqueness among general class of equilibria is not clear. 
\end{rem}

\subsection{The weak formulation}
\label{sect-weak}
The existing literature mainly focuses on the existence of equilibrium by following PDE approach, which requires certain Markovian  structure in the spirit of Remark \ref{rem-Kyleexample} (ii). Inspired by \reff{Qth}, we shall use the weak formulation which is widely used in the literature of stochastic controls and stochastic differential games. 

For this purpose, we fix the canonical space for $Q$:
\bea
\label{QB}
Q \equiv B,\qq \dbF := \si(V)\vee \dbF^Q = \si(V)\vee \dbF^B.
\eea
That is, $Q$ is a $\dbP$-Brownian motion. In this case, the pricing rule $P_t = P(t, Q_{[0, t]})$ and each $\th'_t = \th'(t, Q_{[0,t]})$ are just $\dbF^B$-progressively measurable processes, and when there is no confusion, we will just denote them as $P_t$ and $\th'_t$. Note that under these notations we have $S_t = P_t$. For each $\th'$, we introduce the equivalent probability measure $\dbP^{\th'}$ through the Girsanov theorem:
\bea
\label{Girsanov}
B^{\th'}_t := B_t - \int_0^t \th'_s ds,\q M^{\th'}_t := \exp\Big(\int_0^t \th'_s dB_s - {1\over 2} \int_0^t |\th'_s|^2 ds\Big),\q d \dbP^{\th'}:= M^{\th'}_T d\dbP. 
\eea

We next modify the insider's problem \reff{JPth0}:
 \bea
\label{JPth}
J(P; v, \th') := \dbE^{\dbP^{\th'}}\Big[\int_0^T [ v- P_t] \th'_tdt - \int_0^T f_t(\th'_t) dt \Big].
\eea
Here $f_t(\th'_t) = f(t, Q_{[0,t]}, \th'_t)$ stands for certain cost or risk (e.g. legal risk) for implementing the strategy $\th'$, and as in $P_t$, $\th'_t$ we will often omit the variable $Q\equiv B$. So the insider controls the distribution $\dbP^{\th'}$. Similarly, we may rewrite \reff{Pth0} as:
\bea
\label{Pth}
P^\th_t = P^\th(t, Q_{[0, t]}) = \dbE^{\dbP^\th}\big[V |\cF^Q_t\big],\q 0\le t\le T,\q\mbox{where}\q d \dbP^\th := d\dbP^{\th^v} \mu(dv)
\eea
 is the joint law of $(V, Q)$. By Bayes’ rule, or the Kallianpur-Striebel formula \cite{KS, KS2}, we have, 
\bea
\label{filter}
    P^\th_t=\frac{\int_{\dbR} vM_t^{\th^v}\mu(dv)}{\int_{\dbR}M_t^{\th^v}\mu(dv)}, \quad\dbP-a.s. 
\eea

We now specify some technical conditions. Let $A$ be a domain in certain Euclidean space, $\cA'$ the set of $\dbF^B$-progressively measurable bounded $A$-valued process $\th'$, and $\cA$  the set of admissible controls $\th$ of the insider such that $\th$ is progressively measurable in all variables and $\th^v\in \cA'$ for each $v\in \mbox{Supp}(V)$. Here, to ensure the application of the Girsanov theorem, we assume each $\th'\in \cA'$ is bounded:
\bea
\label{thbound}
|\th'|\le C_{\th'}.
\eea
This assumption is for technical convenience and can be weakened. In particular, we emphasize that $C_{\th'}$ may depend on $\th'$ and we do not require a uniform bound for all $\th'\in \cA'$ (unless $A$ is bounded). Moreover, we assume that
\bea
\label{fcondition}
\dbE\Big[\int_0^T |f_t(0)|^2dt\Big]<\infty,\qq f_t(\th) \ge f_t(0) - C|\th|.
\eea
This ensures that $J(P; v, \th')$ in \reff{JPth} makes sense, with a possible value $-\infty$, but with $J(P; v, 0) \in \dbR$.

\begin{defn}
\label{defn-NE}
We say a pair $(P^*, \th^*)$ is an equilibrium of the model \reff{JPth}-\reff{Pth}, denoted as $(P^*, \th^*) \in \cE$, or simply say an equilibrium of the game, if $P^* = P^{\th^*}$ as in \reff{Pth}, and $\th^{*v} = \mbox{argmax}_{\th'} ~ J(P^*; v, \th')$ as in \reff{JPth}, for all $v\in \mbox{Supp}(V)$.
\end{defn}

 \begin{rem}
 \label{rem-cA}
 In this remark we compare Definitions \ref{defn-NE0} and \ref{defn-NE} when $f\equiv  0$ in \reff{JPth}.
 
(i) For each $\th'\in \cA'$, by Girsanov theorem we know \reff{Qth} has a unique (in law) weak solution, and $J_0(P; v, \th') = J(P; v, \th')$. Then, for $\th^*\in \cA$ and by considering  weak solution of \reff{Qth} in Definition \ref{defn-NE0}, we see that the two definitions are equivalent: $(P^*, \th^*)$ is an equilibrium in the sense of Definition \ref{defn-NE} if and only if  it is an equilibrium in the sense of Definition \ref{defn-NE0}.

(ii) The $\th^*$ in Remark \ref{rem-Kyleexample} does not satisfy the requirement \reff{thbound}, and thus the $(P^*, \th^*)$ there is not an  equilibrium under our definition. However, as pointed out in (i), the only difference between the two definitions is the boundedness (or more generally integrability) of $\th^*$. Moreover, as we will show in Subsection \ref{sect-bridge} later, 
our framework still covers $(P^*,\th^*)$ in certain asymptotic sense.  

(iii)  The Brownian bridge structure in Remark \ref{rem-Kyleexample} does not satisfy the square integrability, and it is hard to establish a general theory even for control problems under such a weak integrability condition. Moreover, if we aim to consider the insider's strategy as general as possible, we may relax  the absolute continuity of $\Th$ further and allow it to be a semimartingale, then the insider's optimal utility becomes $\infty$ and the linear $P^*$ in Remark \ref{rem-Kyleexample} is no longer an equilibrium. See Appendix for the detailed analysis. 

(iv) We should also point out that, when legal risk is considered, namely with the presence of $f$, a large $|\th|$ will expose the insider to a large risk, and thus practically it is also not optimal for the insider to trade too aggressively. See \cite{MaXiaZhang} for more analysis on this.
\end{rem}

\section{FBSDE characterization of equilibria}
\label{sect-FBSDE}
In this section, we characterize all equilibria $\cE$ through a coupled system of forward backward SDEs. For technical convenience, in this section we assume $V$ is discrete: for some $N\ge 2$,
\bea
\label{discrete}
\mbox{Supp}(V) = \{v_1, \cds, v_N\},\q\mbox{with}\q p_i := \dbP(V=v_i) >0,~ i=1,\cds, N.
\eea
In this case, we may naturally denote $\th^i := \th^{v_i}$ for each $\th \in \cA$, and \reff{filter} becomes
\bea
\label{filter-discrete}
 P^\th_t=\frac{\sum_{i=1}^N p_i v_i M_t^{\th^i}}{\sum_{i=1}^N p_i M_t^{\th^i}}, \quad\dbP-a.s. 
\eea
We remark that all the results in this section can be extended to the general case, with the FBSDE system becomes infinite dimensional. 

We first analyze the market marker's problem. Fix $\th\in \cA$ and denote
\bea
\label{Xth}
X^{\th,i}_t := \dbP^\th(V=v_i |\cF_t) = {p_i M^{\th^i}_t\over \sum_{j=1}^N p_j M^{\th^j}_t},\q i=1,\cds, N.
\eea
Then clearly
\bea
\label{Xthproperty}
X^\th_t \in \D_N:= \big\{x \in \dbR^N: 0< x_i <1,~ \sum_{i=1}^N x_i = 1\big\},\qq P^\th_t = \sum_{i=1}^N v_i X^{\th,i}_t.
\eea
Moreover, by \reff{Girsanov}, it follows from the It\^{o} formula that
\bea
\label{XthSDE}
\dis X_t^{\th, i}=p_i+ \int_0^t X^{\th,i}_s\big[\th^i_s - \bar X^\th_s] \big[dB_s - \bar X^\th_s ds\big],\q \mbox{where}\q \bar X^\th_s:= \sum_{j=1}^N \th^j_s X^{\th, j}_s.
\eea
We remark that this SDE system has triple growth in the unknown $X^\th$. However, since $\th\in \cA$ is bounded, it is clear that the $X^\th$ in \reff{Xth} is the unique bounded solution.

We next solve the insider's problem for a given pricing rule $P$. Recall \reff{JPth}, we have $J(P; v_i, \th') = Y^{P, i, \th'}_0$, where $(Y^{P, i, \th'}, Z^{P, i, \th'})$ solves the following linear BSDE:
\bea
\label{BSDEth}
Y^{P,i,\th'}_t = \int_t^T \Big[(v_i- P_s) \th'_s -  f_s(\th'_s) + \th'_s Z^{P,i,\th'}_s\Big]ds - \int_t^T Z^{P,i,\th'}_s dB_s.
\eea
By the standard comparison principle for BSDEs, we have
\bea
\label{YP}
Y^{P,i}_0 = \sup_{\th'}J(P; v_i, \th'),
\eea
where $(Y^{P,i}, Z^{P,i})$ solves the following nonlinear BSDE:
\bea
\label{BSDEP}
Y^{P,i}_t = \int_t^T H_s\big(v_i- P_s + Z^{P,i}_s\big) ds - \int_t^T Z^{P,i}_s dB_s,~\mbox{where}\q H_s(z) := \sup_{\th\in A} [z \th - f_s(\th)].
\eea

\begin{assum}
\label{assum-H}
The function $f$ satisfies \reff{fcondition}; $H$ is differentiable in $z$ such that $\pa_z H$ is bounded and uniformly Lipschitz continuous in $z$, and $H(0)= H_\cd(Q, 0)$ is bounded.
\end{assum}

We now study the equilibria. First, by the envelope theorem,  we see that the optimal argument for the Hamiltonian $H$ is $\pa_z H$. Then the optimal argument of \reff{YP} is 
\bea
\label{envelope}
\th^{P,i}_t := \pa_z H_t(v_i - P_t + Z^{P,i}_t).
\eea
 Plug this into \reff{XthSDE}, and combine with \reff{BSDEP} and \reff{Xthproperty}, we obtain the following coupled system of FBSDEs: for $i=1,\cds, N$, 
\bea
\label{FBSDE}
\left\{\ba{lll}
\dis X_t^{i}=p_i %\!\int_0^t X_s^{i}\big[\bar X_s - \pa_z H_s(\hat Z^i_s)\big]\bar X_s ds 
+ \int_0^t X^{i}_s\big[\pa_z H_s(\hat Z^i_s) - \bar X_s] \big[dB_s - \bar X_s ds\big],\ms\\
\dis Y^{i}_t = \int_t^T H_s\big(\hat Z^{i}_s\big) ds - \int_t^T Z^{i}_s dB_s,\ms\\
\dis \mbox{where}\q \bar X_s:= \sum_{j=1}^N  X^{j}_s\pa_z H_s(\hat Z^j_s),\q \hat Z^i_s := v_i + Z^i_s- \sum_{j=1}^N v_j X^j_s.
\ea\right.
\eea

\begin{thm}
\label{thm-equilibrium}
Let \reff{discrete} and Assumption \ref{assum-H} hold true. Then $(P^*,\th^*)$ is an equilibrium in the sense of Definition \ref{defn-NE} if and only if 
\bea
\label{NEcharacterization}
P^* = \sum_{i=1}^N v_i X^i,\q \th^*(v_i;\cd) = \pa_z H(\hat Z^i),
\eea
for some solution $(X, Y, Z)$ of the FBSDE \reff{FBSDE} such that $X$ is bounded.
\end{thm}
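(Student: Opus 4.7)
(Proof plan for Theorem \ref{thm-equilibrium}.)
The plan is to prove the two implications separately. In both directions the two pillars are: (a) the Kallianpur–Striebel/It\^{o} identity \reff{Xth}–\reff{XthSDE} for the market marker's posterior $X^\th$, together with the break-even identity $P^\th=\sum_i v_i X^{\th,i}$; and (b) the BSDE value representation \reff{BSDEth}–\reff{BSDEP} for the insider's optimization given a pricing rule $P$, where the optimizer is selected by the pointwise maximizer of the Hamiltonian $\pa_z H$ via the envelope theorem. Crucially, the algebraic identification $\hat Z^i = v_i - P + Z^i$ when $P=\sum_j v_j X^j$ is precisely what links the insider's BSDE \reff{BSDEP} to the backward component of the FBSDE \reff{FBSDE}.

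For the forward direction, suppose $(P^*,\th^*)\in\cE$. Since $\th^{*i}\in\cA'$ is bounded, the Girsanov density $M^{\th^{*i}}$ is a true martingale, $X^{\th^*}$ in \reff{Xth} is a bounded $\D_N$-valued process, and by \reff{filter-discrete} we obtain $P^*=\sum_i v_i X^{\th^*,i}$; Itô gives the forward SDE in \reff{FBSDE} for $X:=X^{\th^*}$. For the insider's side, with $P^*$ fixed, BSDE comparison applied to \reff{BSDEth}–\reff{BSDEP} gives $\sup_{\th'}J(P^*;v_i,\th') = Y^{P^*,i}_0$, attained by the maximizing argument of the Hamiltonian. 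Since $\th^{*i}$ is optimal by hypothesis, the envelope characterization \reff{envelope} forces $\th^{*i}_\cdot = \pa_z H_\cdot(v_i - P^*_\cdot + Z^{P^*,i}_\cdot)$ a.e. Setting $Y:=Y^{P^*}$, $Z:=Z^{P^*}$ and using $P^*=\sum_j v_j X^j$ to rewrite $\hat Z^i = v_i + Z^i - \sum_j v_j X^j$, one verifies that $(X,Y,Z)$ solves \reff{FBSDE} and that \reff{NEcharacterization} holds.

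For the reverse direction, given a bounded solution $(X,Y,Z)$ of \reff{FBSDE}, define $P^*$ and $\th^*$ by \reff{NEcharacterization}. Admissibility $\th^{*i}\in\cA'$ is immediate from boundedness of $\pa_z H$ in Assumption \ref{assum-H}. The crucial step is to identify $X$ with $X^{\th^*}$: by construction, $X^{\th^*}$ defined via \reff{Xth} with $\th=\th^*$ is bounded in $\D_N$ and, by It\^{o}, satisfies the same driftless-looking SDE
\[
dU^i_t = U^i_t\big[\th^{*i}_t - \textstyle\sum_j U^j_t \th^{*j}_t\big]\big[dB_t - \textstyle\sum_j U^j_t \th^{*j}_t\,dt\big],\qq U^i_0=p_i,
\]
as $X$ (since $\th^*$ is prescribed by the FBSDE and does not itself depend on $U$). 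Once $X=X^{\th^*}$ is established, we get $P^*=\sum_i v_i X^{\th^*,i}=P^{\th^*}$ from \reff{filter-discrete}, and $\hat Z^i = v_i - P^* + Z^i$, so $(Y,Z)$ solves \reff{BSDEP} with $P=P^*$. Then BSDE comparison combined with the envelope identification \reff{envelope} shows $\th^{*i}$ is optimal for $J(P^*;v_i,\cdot)$, completing verification that $(P^*,\th^*)\in\cE$.

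The main obstacle is the uniqueness step $X=X^{\th^*}$ for the forward SDE above: its coefficients are quadratic in the unknown $U$, so off-the-shelf Lipschitz SDE theory does not apply globally. However, both candidate solutions are bounded (one by hypothesis of the theorem, the other by construction in $\D_N$), and on any bounded region the coefficients are Lipschitz in $U$ (with constants depending on the bound of $\th^*$), so a standard truncation plus Gr\"onwall estimate on $\dbE\sum_i|X^i_{t\wedge\t_n}-X^{\th^*,i}_{t\wedge\t_n}|^2$, followed by letting the localizing time $\t_n\ua T$, yields pathwise uniqueness and closes the argument. All other steps — It\^{o} for $X^\th$, BSDE comparison, and the envelope theorem — are standard given Assumption \ref{assum-H}.
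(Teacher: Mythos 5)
Your proof plan is correct and follows essentially the same route as the paper's own argument: for the forward direction, Itô for the posterior $X^{\th^*}$ plus BSDE comparison and the envelope selection $\th^{*i}=\pa_z H(\hat Z^i)$ to exhibit $(X^{\th^*},Y^{P^*},Z^{P^*})$ as an FBSDE solution; for the reverse direction, identify $X=X^{\th^*}$ by uniqueness of the bounded solution to the fixed-driver SDE, identify $(Y,Z)=(Y^{P^*},Z^{P^*})$ by uniqueness of the Lipschitz BSDE, and read off optimality from the envelope characterization. You spell out the SDE-uniqueness step (localization plus Gr\"onwall) in more detail than the paper, which simply invokes uniqueness of bounded solutions to \reff{XthSDE}, but the content is the same.
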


\begin{proof} If $(P^*,\th^*)\in \cE$ as in Definition \ref{defn-NE}, then from our derivation of \reff{FBSDE} we see that $(X^{\th^*},  Y^{P^*}, Z^{P^*})$ satisfy all the requirements. Thus it suffices to prove the if direction. Assume $(X, Y, Z)$ satisfies \reff{FBSDE} with bounded $X$. We shall show that $(P^*, \th^*)$ defined by \reff{NEcharacterization} is an equilibrium.

First, given $P^* = \sum_{j=1}^N v_j X^j$, by the BSDE in \reff{FBSDE} we see that $(Y, Z)$ satisfies BSDE \reff{BSDEP} with $P=P^*$. Since $\pa_z H$ is bounded and thus $H$ is uniformly Lipschitz continuous in $z$, again for fixed $P^*$, the BSDE \reff{BSDEP} has a unique solution. Then $(Y^{P^*}, Z^{P^*}) = (Y, Z)$, and thus, by \reff{envelope}, the insider's optimal strategy corresponding to $P^*$ is:
\bea
\label{thP*}
\th^{P^*, i} = \pa_z H(v_i - P^* + Z^{P^*,i}) =  \pa_z H\big(v_i - \sum_{j=1}^N v_j X^j + Z^i\big) = \th^*(v_i; \cd).
\eea

On the other hand, given $\th^*(v_i;\cd) = \pa_z H(\hat Z^i)$, by the forward SDE as well as the definition of $\bar X$ in \reff{FBSDE}, we see that $X$ satisfies \reff{XthSDE} with $\th = \th^*$. Since $\pa_z H$ is uniformly Lipschitz continuous in $z$, and by \reff{thbound}, we see that SDE \reff{XthSDE} has a unique bounded solution. Then $X^{\th^*} = X$, and thus, by \reff{Xthproperty} and the third line of \reff{FBSDE},
\beaa
P^{\th^*} = \sum_{i=1}^N v_i X^{\th^*,i} = \sum_{i=1}^N v_i X^i = P^*.
\eeaa
This, together with \reff{thP*}, implies that $(P^*, \th^*)\in \cE$.
 \end{proof}
 
 \begin{rem}
\label{rem-MFG}
Note that $X_t = (X^1_t, \cds, X^N_t)\in \D_N$ stands for the conditional distribution of $V$, under the equilibrium measure $\dbP^{\th^*}$ as in \reff{Xth}.

(i) The FBSDE \reff{FBSDE} shares similar structure as the mean field game system, whose solutions have one to one correspondence with the mean field equilibria, see e.g. \cite{CD1}. In particular, in both systems the backward component corresponds to the optimal value of a player and  to an HJB equation in the Markov case. However, for mean field game system, the forward component is the trajectory of a moving particle and in the Markov case its distribution satisfies a Fokker-Planck equation; while in \reff{FBSDE} $X$ itself stands for a (random) distribution and its law will become a law of random measure. We shall explore these interesting connections in details in future research. 

(ii) The dynamic of $X$ can be viewed as the transport of the conditional distribution of $V$. So our approach also has intrinsic connections with the optimal transport approach for insider problems proposed by \cite{back2021, Bose2024}. We shall leave this also for future research.
\end{rem}

\section{A wellposedness result over small time duration}
\label{sect-smallT}
The wellposedness of the FBSDE \reff{FBSDE} is in general challenging, especially since the diffusion term of the forward SDE involves $Z$. However, the result is easy when $T$ is small.

\begin{thm}\label{thm-smallT}
Let   \reff{discrete} and Assumption \ref{assum-H} hold true. Then there exists $\d>0$, which depends only on the distribution of $V$ in \reff{discrete} and the Lipschitz constant of $H$ and $\pa_zH$, such that the FBSDE \reff{FBSDE} has a unique solution with bounded $X$ whenever $T\le \d$. Consequently, when $T\le \d$, the game has a unique equilibrium $(P^*, \th^*)\in \cE$. 
\end{thm}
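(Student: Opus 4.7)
The plan is to establish the unique solvability of the FBSDE \reff{FBSDE} by a classical Picard/contraction argument over a short time interval, and then deduce uniqueness of the equilibrium via Theorem \ref{thm-equilibrium}. I would exploit three structural features: (i) the forward component $X$ stays in $\D_N$ because $\sum_i X^i\equiv 1$ is preserved by \reff{FBSDE} and each $X^i$ has the multiplicative form $dX^i = X^i[\ldots]dB - X^i[\ldots]\bar X ds$ with $X^i_0=p_i>0$, forcing positivity; (ii) once $X$ is frozen, the BSDE in \reff{FBSDE} is standard Lipschitz, since $\pa_zH$ is bounded by Assumption \ref{assum-H} and hence $H$ is globally Lipschitz in $z$; (iii) once $Z$ is frozen, the forward SDE coefficients $X^i[\pa_zH(\hat Z^i)-\bar X]$ and $-X^i[\pa_zH(\hat Z^i)-\bar X]\bar X$ are bounded on $\D_N$ and Lipschitz in $X$ on bounded sets.

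Concretely, on the Banach space $\dbH^2_T$ of $\dbF^B$-progressively measurable $\dbR^N$-valued processes with $\dbE\int_0^T|\cd|^2 ds<\infty$, I define a map $\Phi$ as follows. Given $Z^{in}\in \dbH^2_T$, first solve the forward SDE in \reff{FBSDE} with the argument of $\pa_zH$ replaced by $v_i + Z^{in,i}_s - \sum_j v_j X^j_s$. By (i) and (iii), this yields a unique solution $X$ with $X_t\in \D_N$ a.s.\ for all $t\in[0,T]$. Next, plug this $X$ into the driver of the BSDE in \reff{FBSDE} and solve the resulting Lipschitz BSDE for $(Y, Z^{out})$; existence and uniqueness here are standard. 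Set $\Phi(Z^{in}):=Z^{out}$. Fixed points of $\Phi$ are in bijection with solutions $(X,Y,Z)$ of \reff{FBSDE} having bounded $X$.

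For contraction, take inputs $Z^{in,k}\in \dbH^2_T$, $k=1,2$, with outputs $(X^k, Z^{out,k})$, and set $\D X := X^1-X^2$ and similarly $\D Z^{in}, \D Z^{out}$. Using the a priori bound $X^k\in \D_N$ together with the boundedness and Lipschitz continuity of $\pa_zH$, BDG and Gronwall applied to the forward SDE give an estimate of the form
\[
\dbE\sup_{s\le T}|\D X_s|^2 \le C\,\dbE\int_0^T|\D Z^{in}_s|^2 ds,
\]
with $C$ depending only on the distribution of $V$ and on the Lipschitz/bound constants of $H$ and $\pa_zH$. Since the BSDE driver $H(v_i + Z^{out,i} - \sum_j v_jX^j)$ is Lipschitz in both $X$ and $Z^{out}$, standard BSDE stability then yields
\[
\dbE\int_0^T|\D Z^{out}_s|^2 ds \le C\,\dbE\int_0^T|\D X_s|^2 ds \le CT\,\dbE\sup_{s\le T}|\D X_s|^2 \le C'T\,\dbE\int_0^T|\D Z^{in}_s|^2 ds.
\]
Choosing $\d>0$ so that $C'\d<1$ makes $\Phi$ a strict contraction on $\dbH^2_T$ for every $T\le \d$; the unique fixed point yields the unique FBSDE solution with bounded $X$, and uniqueness of the equilibrium then follows directly from Theorem \ref{thm-equilibrium}.

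The main obstacle I anticipate is the forward SDE estimate: the diffusion coefficient couples $\pa_zH(\hat Z^i)$ with $X^i$ and $\bar X$, so it depends nonlinearly on both $X$ and $Z^{in}$. The remedy is to separate concerns---first establish a priori that $X$ stays in $\D_N$ using the preservation of $\sum_i X^i=1$ and the multiplicative form of the $X^i$-SDE, and only then linearize in the differences. Once this uniform bound is in hand, all Lipschitz constants depend only on the data listed in the theorem, and the remaining arguments reduce to routine BDG, Gronwall, and BSDE stability estimates.
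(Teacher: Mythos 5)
Your proposal rests on the same underlying principle as the paper—small-time contraction for a Lipschitz FBSDE—but organizes it differently. The paper does not perform Picard iteration by hand; instead it \emph{truncates} the whole system \reff{FBSDE} by replacing every $X^i$ with $I(X^i):=0\vee X^i\wedge 1$ to obtain \reff{FBSDE-truncate}, whose coefficients are then \emph{globally} Lipschitz, and at that point simply invokes standard small-time FBSDE wellposedness from the literature. Only afterwards does it verify that the truncated solution satisfies $0<\cX^i<1$ (via a stopping-time argument for $\sum_i\cX^i\equiv 1$ and positivity of the multiplicative SDE), so that $I(\cX^i)=\cX^i$ and the truncated solution solves the original system. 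For uniqueness it shows that any solution of \reff{FBSDE} with bounded $X$ yields a bounded $\th$, hence by uniqueness of the bounded solution to \reff{XthSDE} has $X\in\D_N$, and therefore also solves \reff{FBSDE-truncate}. What the truncation buys is that the non-Lipschitz issue is eliminated once and for all, so no localization is ever needed.

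The genuine gap in your argument is exactly where you assert ``By (i) and (iii), this yields a unique solution $X$ with $X_t\in\D_N$.'' Observation (iii) is only a \emph{local} Lipschitz statement: the forward coefficients involve $\bar X$ and $\bar X^2$, which are quadratic/cubic in $X$, so without an a priori bound the SDE might fail to have a global solution on $[0,T]$. Observation (i) as stated is circular—it derives $\sum_i X^i\equiv 1$ and $X^i>0$ by manipulating the SDE, which presupposes a solution already exists. You need either a localization/stopping-time argument (solve up to $\tau_n:=\inf\{t:|X_t|\ge n\}\wedge T$, verify the constraint up to $\tau_n$, deduce $\tau_n=T$) or the paper's truncation. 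A second, subtler omission: your bijection between fixed points of $\Phi$ and solutions of \reff{FBSDE} with bounded $X$ requires knowing that any \emph{bounded} solution of the forward equation with a given $Z$ must lie in $\D_N$; the paper handles this by the uniqueness of bounded solutions to \reff{XthSDE} with a fixed bounded control, a point you should make explicit. Once these two items are repaired your contraction estimates (BDG, Gronwall, BSDE stability producing a factor $C'T$) are sound and give the stated result with $\d$ depending only on the distribution of $V$ and the Lipschitz/bound constants of $H,\pa_z H$.
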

We remark that, to the best of our knowledge, this is the first uniqueness result for continuous time Kyle-Back model, without restricting to special types of pricing rules.

\ms
\begin{proof} We first prove the existence. The main difficulty lies in the fact that the forward SDE is not Lipschitz continuous in $(X, Z)$. We shall get around of this difficulty by truncation. To be precise, in light of \reff{Xthproperty} we introduce the truncation function $I(x) := 0 \vee x \wedge 1$. We then consider the following truncated FBSDE:
\bea
\label{FBSDE-truncate}
\left\{\ba{lll}
\dis \cX_t^{i}=p_i+ %\!\int_0^t I(\cX_s^{i})\big[\bar \cX_s - \pa_z H_s(\hat \cZ^i_s)\big]\bar \cX_s ds +
 \int_0^t I(\cX^{i}_s)\big[\pa_z H_s(\hat \cZ^i_s) - \bar \cX_s] \big[dB_s - \bar \cX_sds\big],\ms\\
\dis \cY^{i}_t = \int_t^T H_s\big(\hat \cZ^{i}_s\big) ds - \int_t^T \cZ^{i}_s dB_s,\ms\\
\dis \mbox{where}\q \bar \cX_s:= \sum_{j=1}^N  I(\cX^{j}_s)\pa_z H_s(\hat \cZ^j_s),\q \hat \cZ^i_s := v_i + \cZ^i_s- \sum_{j=1}^N v_j \cX^j_s.
\ea\right.
\eea
Since $H$ and $\pa_z H$ are uniformly Lipschitz continuous, the above FBSDE is uniformly Lipschitz continuous, with the uniform Lipschitz constant depending only on the distribution of $V$ in \reff{discrete} and Lipschitz constant of $H$ and $\pa_zH$. Then, by standard FBSDE literature, see e.g. \cite{Zhang2017}, there exists $\d>0$, depending only on the above model parameters, such that the FBSDE \reff{FBSDE-truncate} has a unique solution $(\cX, \cY, \cZ)$ whenever $T\le \d$. Then it suffices to prove $0<\cX^i<1$, which implies $I(\cX^i) =\cX^i$, and thus $(\cX, \cY, \cZ)$ satisfies \reff{FBSDE}.

To this end, we first note that $I(0)=0$. Then, since $p_i>0$, by standard SDE estimates we derive from the forward SDE in \reff{FBSDE-truncate} that $\cX^i_t >0$ for all $0\le t\le T$ and $i=1,\cds, N$. Next, note that $\sum_{i=1}^N p_i = 1$ and
\beaa
\sum_{i=1}^N I(\cX^{i}_s)\big[\pa_z H_s(\hat \cZ^i_s) - \bar \cX_s] = \sum_{i=1}^N I(\cX^{i}_s)\pa_z H_s(\hat \cZ^i_s) - \sum_{i=1}^N I(\cX^{i}_s) \bar \cX_s = \big(1-\sum_{i=1}^N I(\cX^{i}_s)\big) \bar \cX_s.
\eeaa
Then, the forward SDEs in \reff{FBSDE-truncate} we have
\bea
\label{sumX}
\sum_{i=1}^N \cX^i_t = 1 + %\int_0^t \big(1-\sum_{i=1}^N I(\cX^{i}_s)\big) |\bar \cX_s|^2 ds + 
\int_0^t \big(1-\sum_{i=1}^N I(\cX^{i}_s)\big) \bar \cX_s \big[dB_s - \bar \cX_sds\big].
\eea
Introduce a stopping time 
\beaa
\t := \inf\big\{t\ge 0: \max_{1\le i\le N} \cX^i_t \ge 1\big\}\wedge T.
\eeaa
Since $\max_{1\le i\le N} \cX^i_0 = \max_{1\le i\le N} p_i <1$, we have $\t>0$. For $t\in [0, \t]$, we have $I(\cX^i_t) = \cX^i_t$, and thus \reff{sumX} becomes
\beaa
\sum_{i=1}^N \cX^i_t = 1 + %\int_0^t \big(1-\sum_{i=1}^N \cX^{i}_s\big) |\bar \cX_s|^2 ds +
 \int_0^t \big(1-\sum_{i=1}^N \cX^{i}_s\big) \bar \cX_s \big[dB_s - \bar \cX_sds\big].
\eeaa
This implies that $\sum_{i=1}^N \cX^{i}_t \equiv 1$, $0\le t \le \t$. Then, since $\cX^i_t >0$, we must have $\cX^i_t <1$ for $t\le \t$, and thus $\t = T$. Consequently, $0<\cX^i_t<1$ for $t\in [0, T]$ and hence $(\cX, \cY, \cZ)$ is a solution to FBSDE \reff{FBSDE}.  

We next prove the uniqueness. Let $(\tilde X, \tilde Y, \tilde Z)$ be an arbitrary solution to \reff{FBSDE} such that $\tilde X$ is bounded. Denote $\th^i_t := \pa_z H(v_i+ \tilde Z^i_t-\sum_{j=1}^N v_j \tilde X^j_t)$. Then $\tilde X$ satisfies SDE \reff{XthSDE}. In particular, by the uniqueness of bounded solution to \reff{XthSDE}, we have $0< \tilde X^i_t <1$, and thus $(\tilde X, \tilde Y, \tilde Z)$ also satisfies the truncated FBSDE \reff{FBSDE-truncate}. Now it follows from the uniqueness of \reff{FBSDE-truncate} that $(\tilde X, \tilde Y, \tilde Z) = (X, Y, Z)$, hence the uniqueness of \reff{FBSDE}. 

Finally, by Theorem \ref{thm-equilibrium}  the game has a unique equilibrium $(P^*, \th^*)$.
\end{proof}

From now on we study the Markov case. Assume
\bea
\label{Markov}
f_t(Q_{[0, t]}; \th) = f(t, \th)\q\mbox{and hence}\q H_t(Q_{[0, t]}, z) = H(t, z),
\eea
where, by abusing the notations, the functions $f, H$ in the right side above are deterministic. Then FBSDE \reff{FBSDE} becomes Markovian, and the following result is immediate.

\begin{prop}
\label{prop-MarkovianX}
Let all the conditions in Theorem \ref{thm-smallT} and \reff{Markov} hold true, and $T\le \d$. Then the solution $X$ is Markovian under $\dbP$, and there exist deterministic functions $u_i(t, x_1, \cds, x_N)$, $i=1,\cds, N$, such that $Y^i_t = u_i(t, X^1_t, \cds, X^N_t)$. 
\end{prop}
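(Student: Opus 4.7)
The plan is to construct a Markovian decoupling field $u$ for the FBSDE \reff{FBSDE}, obtain the representation $Y^i_t = u_i(t, X_t)$, and then read off the Markov property of $X$ from the flow structure. Under \reff{Markov} every coefficient of \reff{FBSDE} and of the truncated version \reff{FBSDE-truncate} is a deterministic function of $(t, X_t, Z_t)$ alone, with no dependence on the Brownian path, so both systems are genuinely Markovian and the uniform Lipschitz estimates from the proof of Theorem \ref{thm-smallT} remain in force.

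First, I would extend the wellposedness of Theorem \ref{thm-smallT} to arbitrary initial data. For every $(t, x) \in [0, T] \times \D_N$, run \reff{FBSDE-truncate} on $[t, T]$ with initial state $\cX^{t, x}_t = x$; since $T - t \le T \le \d$, the contraction argument of Theorem \ref{thm-smallT} produces a unique bounded solution $(\cX^{t, x}, \cY^{t, x}, \cZ^{t, x})$, and the positivity--normalization argument at the end of that proof shows $\cX^{t, x}_s \in \D_N$ for all $s \in [t, T]$, so this solution also solves the untruncated FBSDE \reff{FBSDE}. Define
\[u_i(t, x) := \cY^{t, x, i}_t, \qquad i = 1, \ldots, N.\]
Because the FBSDE coefficients and terminal data are deterministic, $u_i(t, x)$ is a deterministic function of $(t, x)$, Lipschitz in $x$ by the stability of the Picard contraction.

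Next I would identify the equilibrium solution $(X, Y, Z)$ from Theorem \ref{thm-smallT} with this flow. For each $t \in [0, T]$, the restriction $(X_r, Y_r, Z_r)_{r \in [t, T]}$ solves \reff{FBSDE} on $[t, T]$ with the (random) initial condition $X_t$. The uniqueness step in Theorem \ref{thm-smallT}, applied conditionally on $\cF_t$ together with the measurable/Lipschitz dependence of $(\cX^{t, x}, \cY^{t, x}, \cZ^{t, x})$ on $x$, forces this restriction to coincide $\dbP$-a.s. with $(\cX^{t, X_t}, \cY^{t, X_t}, \cZ^{t, X_t})$. Evaluating at $r = t$ yields $Y^i_t = u_i(t, X_t)$. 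For the Markov property, note that $\cX^{t, x}$ is a measurable functional of $x$ and of the Brownian increments $(B_r - B_t)_{r \in [t, T]}$, which are independent of $\cF_t$; hence the conditional law of $(X_r)_{r \ge t}$ given $\cF_t$ depends only on $X_t$, proving that $X$ is Markov under $\dbP$.

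The main obstacle will be making the substitution of the random state $X_t$ for the parameter $x$ in the flow $(\cX^{t, x}, \cY^{t, x}, \cZ^{t, x})$ rigorous. This is a Yamada--Watanabe type issue that ultimately reduces to the measurability and Lipschitz dependence of the Picard iterates on the initial point, both of which follow from the uniformity of the Lipschitz constants in the proof of Theorem \ref{thm-smallT}. A secondary technical point, already handled there, is that the truncation $I$ in \reff{FBSDE-truncate} is inert on solutions with initial condition in $\D_N$, so working with \reff{FBSDE-truncate} rather than directly with the cubically growing \reff{FBSDE} costs nothing.
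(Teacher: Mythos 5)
Your proof is correct and follows essentially the same route the paper has in mind: the paper declares the result ``immediate'' once \reff{Markov} makes the FBSDE coefficients deterministic, and what it is implicitly invoking is exactly the decoupling-field construction you spell out (solve the truncated Markovian FBSDE from arbitrary initial data $(t,x)\in[0,T]\times\D_N$, note $T-t\le\d$ so Theorem \ref{thm-smallT}'s contraction and the positivity/normalization argument apply, set $u_i(t,x):=\cY^{t,x,i}_t$, and use conditional uniqueness to substitute $x=X_t$). The flow/measurability step you flag is the standard Yamada--Watanabe-type verification, and your handling of it is the right one.
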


\begin{rem}
\label{rem-Master}
Recall Remark \ref{rem-MFG} that $X_t$ stands for the conditional distribution of $V$ under $\dbP^{\th^*}$. We may rewrite the connection between $X$ and $Y$ as $Y^i_t = U(t, v_i, \cL_{V|\cF^B_t})$. Note that $\{v_i\}$ is exactly the support of the distribution $\cL_{V|\cF^B_t}$, so $U$ is a function taking the form $U(t, x, \mu)$, where $\mu\in \cP(\dbR)$, the probability measures on $\dbR$, and $x\in Supp(\mu)$. It will be very interesting to explore its connection with mean field game master equations (cf. \cite{CCD, CDLL, GMMZ}). We shall leave this for future research.
\end{rem}

\begin{prop}
\label{prop-MarkovianS}
Let all the conditions in Theorem \ref{thm-smallT} and \reff{Markov} hold true, and $T\le \d$. Assume further that $N=2$. Then $S\equiv P^*$ is Markovian under $\dbP$. 
\end{prop}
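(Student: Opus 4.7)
The plan is to reduce the claim directly to Proposition~\ref{prop-MarkovianX} by exploiting that when $N=2$ the state $X=(X^1,X^2)$ is effectively one dimensional, and that $P^*$ is then an affine bijection of $X^1$.

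First I would recall that, by the second identity in \reff{Xthproperty}, $\sum_{i=1}^N X^i_t \equiv 1$, which in the case $N=2$ gives $X^2_t = 1-X^1_t$. Consequently, from \reff{NEcharacterization},
\[
P^*_t \;=\; v_1 X^1_t + v_2 X^2_t \;=\; v_2 + (v_1-v_2)X^1_t,
\]
so $P^*_t$ is an affine function of the single coordinate $X^1_t$. Since $N=2$ and the two support points of $V$ are distinct, $v_1 \neq v_2$, and this affine map $x\mapsto v_2+(v_1-v_2)x$ is a bijection of $\dbR$; in particular $X^1_t = (P^*_t-v_2)/(v_1-v_2)$ is measurable with respect to $P^*_t$, and vice versa.

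Next I would invoke Proposition~\ref{prop-MarkovianX}, which guarantees that under \reff{Markov} and $T\le\d$ the full vector process $X=(X^1,X^2)$ is Markovian under $\dbP$. Because $X^2 = 1-X^1$ pathwise, the natural filtration of $X$ coincides with that of $X^1$, so $X^1$ is itself a one dimensional Markov process under $\dbP$. Then the bijective affine change of variable above transports the Markov property from $X^1$ to $P^*$: for any bounded Borel $\f:\dbR\to\dbR$ and any $s\le t$, writing $\psi(x):=\f\bigl(v_2+(v_1-v_2)x\bigr)$,
\[
\dbE\bigl[\f(P^*_t)\mid \cF^{P^*}_s\bigr] \;=\; \dbE\bigl[\psi(X^1_t)\mid \cF^{X^1}_s\bigr] \;=\; g(s,X^1_s)
\]
for some Borel $g$, and the right side is a Borel function of $P^*_s$. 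Hence $P^*=S$ is Markovian under $\dbP$, which is exactly the claim.

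I do not expect any genuine obstacle here; the argument is essentially an observation once Proposition~\ref{prop-MarkovianX} is in hand. The only thing worth flagging in the write-up is \emph{why} the same reasoning fails for $N\ge 3$: in that case $P^*=\sum_i v_i X^i$ is only a linear projection of a higher dimensional Markov process $X$, and such projections are generally not Markov, which is consistent with the discussion preceding the proposition about the failure of Markovianity of $S$ when $V$ has three or more values.
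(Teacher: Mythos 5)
Your argument is correct and follows the same route as the paper's proof: use $X^2=1-X^1$ to reduce to the one-dimensional process $X^1$, invoke Proposition~\ref{prop-MarkovianX} for its Markov property, and observe that $P^*=v_2+(v_1-v_2)X^1$ is a bijective affine function (since $v_1\neq v_2$) so Markovianity transfers. The paper states this more tersely but makes exactly the same observations.
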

\proof Note that $X^2=1-X^1$ when $N=2$, then the Markov property of $(X^1, X^2)$ clearly implies the Markov property of $X^1$.
By \reff{NEcharacterization} we have $P^*_t = v_1 X^1_t + v_2(1-X^1_t) = (v_1-v_2) X^1_t + v_2$. Since $v_1 - v_2\neq 0$ (otherwise $N=1$ and there is no inside information), then clearly $P^*$ is also Markovian under $\dbP$.
\qed

\begin{rem}
\label{rem-Markov}
(i) Note that we are using weak formulation, so the ``true'' distribution of $V$ for the market maker should be under $\dbP^{\th^*}$. The Markov property of $S=P^*$, however, is under the reference measure $\dbP$, under which $Q$ is a martingale, rather than $\dbP^{\th^*}$.

(ii) When $N>2$, although the random vector $X=(X^1, \cds, X^N)$ is Markovian under $\dbP$, 
%it is unlikely that $P^*_t = \sum_{i=1}^N v_i X^i_t$ will remain Markov. 
typically $P^*_t = \sum_{i=1}^N v_i X^i_t$ is not, see some discussions on this in Appendix below. Since our equilibrium here is unique, then in this case there will be no equilibrium such that the market price $S=P^*$ is Markovian under $\dbP$.

(iii) In the literature, in order to use the PDE approach to construct an equilibrium, one typically assumes the structure that $S_t = P(t, \G_t)$, where $P(t,\cd)$ is a one to one function, $\G$ is the so called weighted total order process satisfying $d\G_t = w(t, \G_t) dQ_t$ for some weight function $w$. Clearly, under the reference measure $P$ such that $Q$ is a Brownian motion (or more generally a Markovian martingale), $\G$ is Markovian, and since $P$ is one to one, we see that $S$ is Markovian under $\dbP$. Therefore, from (ii) we see that the PDE approach cannot succeed in the general setting of Proposition \ref{prop-MarkovianX} when $N\ge 3$. 
\end{rem}

We conclude this section with an example satisfying the conditions in Proposition \ref{prop-MarkovianX}.
\begin{eg}
\label{eg-H}
Consider 
\bea
\label{feg}
\left.\ba{c}
A = [-1, 1],~ f(t, \th) = f(\th) = - \sqrt{1-|\th|^2};\ms\\
\mbox{or}\q A = \dbR, ~ f(\th) = \left\{ \ba{lll} 1- \sqrt{1-|\th|^2},& |\th|\le 1,\\ \infty,& |\th|>1.\ea\right.
\ea\right.
\eea
In both cases, we have
\beaa
H(t,z) = H(z) = \sqrt{1+|z|^2},\q \mbox{with optimal argument}~ \th^* = {z\over \sqrt{1+|z|^2}}.
\eeaa
Note that
\beaa
\pa_z H(z) = {z\over \sqrt{1+|z|^2}},\q \pa_{zz} H(z) = {1\over \sqrt{(1+|z|^2)^3}}.
\eeaa
One can easily verify  Assumption \ref{assum-H} in this case. Consequently, by Theorem \ref{thm-smallT}, there exists $\d>0$ such that the model \reff{JPth}-\reff{Pth}-\reff{feg} has a unique equilibrium when $T\le \d$. 
\end{eg}

\section{Set value of the game}
\label{sect-setvalue}
Inspired by \cite{FRZ2021} and \cite{IseriZhang}, in this section we study the set value of the game, which weakens the notion of equilibrium and focuses on the insider's values, and is particularly effective when there are multiple equilibria. For simplicity we still consider discrete $V$ as in \reff{discrete}.

We first define the raw set value of the problem:
\bea
\label{rawV}
\dbV_0 := \big\{ \big(J(\dbP^*; v_i, \th^*)\big)_{1\le i\le N}: (P^*, \th^*)\in \cE\big\} = \big\{Y_0: ~\mbox{all solution to}~\reff{FBSDE}\big\}\subset \dbR^N.
\eea
That is, $\dbV_0$ is the set of values (not the set of equilibria!) of the insider over all equilibria, which is by nature unique.   Moreover, by viewing the empty set $\emptyset$ as a possible value, we may analyze $\dbV_0$ even when there is no equilibrium. However, $\dbV_0$ relies on true equilibria, and  is hard to analyze mathematically. Indeed, in standard stochastic control problems, the value function is defined as the supremum/infimum value, which is the limit of approximate values over approximate optimal controls, rather than the value over true optimal controls which may not exist. We shall modify  $\dbV_0$ and define the set value $\dbV$ in the same spirit. For this purpose, we first introduce approximate equilibria. Recall \reff{JPth} and \reff{Pth}.

 \begin{defn}
\label{defn-NEe}
For any $\e>0$, we say a pair $(P^\e, \th^\e)$ is an $\e$-equilibrium of the model \reff{JPth}-\reff{Pth}, denoted as $(P^\e, \th^\e) \in \cE_\e$, if 
\bea
\label{the}
&\dis \int_\dbR \Big[\sup_{\th'\in \cA'} J(P^\e; v, \th') - J(P^\e; v, \th^{\e,v})\Big] \mu(d v) \le \e;\\
\label{Pe}
&\dis \dbE^{\dbP}\Big[ \int_0^T\big|P^\e_t - P^{\th^\e}_t\big|^2 dt\Big] \le \e^2.
\eea
\end{defn}
Here we consider the square error for $P^\e$ and the expectation is under $\dbP$, instead of $\dbP^{\th^{\e}}$, for technical reasons. 
We are now ready to define the set value of the game under \reff{discrete}:
\bea
\label{dbV}
&\dis \dbV := \bigcap_{\e>0} \dbV_\e,\q\mbox{where}\\
&\dis \dbV_\e := \Big\{y\in \dbR^N: \sum_{i=1}^N p_i \big|y_i- J(P^\e; v_i, \th^\e(v_i;\cd)\big| \le \e ~ \mbox{for some}~ (P^\e, \th^\e)\in \cE_\e\Big\}.\nonumber
\eea

It is clear that $\dbV_0 \subset \dbV$, and the existence of equilibrium is equivalent to $\dbV_0 \neq \emptyset$. We may say the game has a value when $\dbV \neq \emptyset$. The following result shows that the existence of game value is weaker, and thus more likely, than the existence of equilibrium. 

\begin{prop}
\label{prop-Vcompact}
Let   \reff{discrete} and Assumption \ref{assum-H} hold true. Then $\dbV$ is compact. Moreover, $\dbV\neq \emptyset$ if and only if there exists an $\e$-equilibrium for any $\e>0$.
\end{prop}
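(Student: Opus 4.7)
The plan is to establish compactness by proving closedness and boundedness of $\dbV$ as a subset of $\dbR^N$, and then to derive the equivalence from these facts together with a diagonal extraction along $\e_n\downarrow 0$. The structural fact used throughout is the monotonicity $\dbV_\e\subset \dbV_{\e'}$ whenever $\e\le \e'$: the inclusion $\cE_\e\subset \cE_{\e'}$ is immediate from the definitions \reff{the}--\reff{Pe}, and the sum bound in \reff{dbV} is preserved under relaxation. This monotonicity lets me pass freely between different tolerance levels.

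For boundedness, fix $\e\in(0,1]$ and $y\in \dbV_\e$ with witness $(P^\e,\th^\e)\in \cE_\e$. From \reff{the} in the discrete setting \reff{discrete} and the non-negativity of each summand $\sup_{\th'}J(P^\e;v_i,\th')-J(P^\e;v_i,\th^{\e,v_i})$, the pointwise gap satisfies $\sup_{\th'}J(P^\e;v_i,\th')-J(P^\e;v_i,\th^{\e,v_i})\le \e/p_i$. Together with the trivial lower bound $\sup_{\th'}J(P^\e;v_i,\th')\ge J(P^\e;v_i,0)=-\dbE[\int_0^T f_t(0)dt]$, Assumption \ref{assum-H} gives a uniform lower bound on $J(P^\e;v_i,\th^{\e,v_i})$. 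For the upper bound I would apply the standard $L^2$ a priori estimate to BSDE \reff{BSDEP}: its driver $H_s(v_i-P^\e_s+z)$ is uniformly Lipschitz in $z$ (since $\pa_z H$ is bounded), with $|H_s(v_i-P^\e_s)|\le |H_s(0)|+C(|v_i|+|P^\e_s|)$, giving $|Y^{P^\e,i}_0|^2\le C\,\dbE\big[\int_0^T(|H_s(0)|^2+|v_i|^2+|P^\e_s|^2)ds\big]$. Since $|P^{\th^\e}_t|\le \max_i|v_i|$ (it is a conditional expectation of the bounded $V$ under $\dbP^{\th^\e}$) and \reff{Pe} controls $\|P^\e-P^{\th^\e}\|_{L^2([0,T]\times\Omega)}$, the right-hand side is uniformly bounded in $\e\le 1$. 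Combining the two bounds, $|y_i|\le C_0+\e/\min_j p_j$ uniformly.

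For closedness, take $y^{(n)}\in \dbV$ with $y^{(n)}\to y$ and fix $\delta>0$. Pick $n$ with $\sum_i p_i|y_i-y^{(n)}_i|\le \delta/2$. Since $y^{(n)}\in \dbV_{\delta/2}$, a witness $(P,\th)\in \cE_{\delta/2}\subset \cE_\delta$ gives $\sum_i p_i|y^{(n)}_i-J(P;v_i,\th^{v_i})|\le \delta/2$; the triangle inequality then places $y$ in $\dbV_\delta$. Since $\delta>0$ was arbitrary, $y\in \dbV$, and combined with boundedness this gives compactness.

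For the equivalence, $\Rightarrow$ is immediate since $\dbV\subset \dbV_\e$ forces each $\dbV_\e$ to be nonempty, which requires $\cE_\e\ne\emptyset$. For $\Leftarrow$, pick $\e_n\downarrow 0$, choose $(P^{\e_n},\th^{\e_n})\in \cE_{\e_n}$, and set $y^{(n)}_i:=J(P^{\e_n};v_i,\th^{\e_n,v_i})$, so $y^{(n)}\in \dbV_{\e_n}$ trivially. The boundedness bound above yields that $\{y^{(n)}\}$ is bounded in $\dbR^N$; extract a convergent subsequence $y^{(n_k)}\to y^*$, and for any fixed $\delta>0$ use $\e_{n_k}\le \delta/2$ together with $y^{(n_k)}\in \dbV_{\e_{n_k}}\subset \dbV_{\delta/2}$ (by monotonicity) to rerun the closedness argument and conclude $y^*\in \dbV_\delta$; arbitrariness of $\delta$ gives $y^*\in \dbV$. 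The main obstacle I foresee is the $L^2$ bound on $P^\e$ in the boundedness step: the rational price $P^{\th^\e}$ is pathwise bounded, but \reff{Pe} only controls $P^\e-P^{\th^\e}$ in $L^2(\dbP)$, so the BSDE estimate must be cast in a form that absorbs $L^2$ perturbations of $P^\e$ rather than demanding a uniform pathwise bound; once that is handled, the rest is a routine topological packaging of the monotonicity of $\{\dbV_\e\}$.
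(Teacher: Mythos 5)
Your argument is correct and follows essentially the same route as the paper: boundedness of $\dbV_\e$ via the $L^2$ control of $P^\e$ from \reff{Pe} plus the a priori BSDE estimate on \reff{BSDEP} (and the $\e/p_i$ gap from \reff{the}), closedness via the nesting $\dbV_\e \subset \dbV_{\e'}$, and the equivalence via compactness and subsequence extraction along $\e_n \downarrow 0$. The only cosmetic differences are that you obtain the lower bound on $J(P^\e;v_i,\th^{\e,v_i})$ through $J(P^\e;v_i,0)$ whereas the paper reads both bounds off $|Y^{P^\e,i}_0|\le C$, and you prove closedness directly while the paper packages it as $\dbV=\bigcap_{\e>0}cl(\dbV_\e)$ with $cl(\dbV_{\e_1})\subset\dbV_{\e_2}$; neither changes the substance.
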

\proof We first show that, for $0<\e\le 1$, $\dbV_\e$ is bounded whenever $\cE_\e\neq \emptyset$. Indeed, fix an arbitrary $(P^\e, \th^\e)\in \cE_\e$. By \reff{discrete} $V$ is bounded, then it follows from \reff{Pth} that $P^{\th^\e}$ is bounded, and thus by \reff{Pe} we have $\dbE^{\dbP}\Big[\int_0^T |P^\e_t|^2dt\Big]\le C<\infty$. Applying Assumption \ref{assum-H} on \reff{BSDEP}, this implies further that $|Y^{P^\e,i}_0|\le C$.  Note that, for $i=1,\cds, N$,
\beaa
J(P^\e; v_i, \th^\e(v;\cd)) \le \sup_{\th'\in \cA'} J(P^\e; v_i, \th') = Y^{P^\e,i}_0\le C.
\eeaa
On the other hand, by \reff{the},
\beaa
J(P^\e; v_i, \th^\e(v;\cd)) \ge Y^{P^\e,i}_0 - {\e\over p_i} \ge -C.
\eeaa
So $|J(P^\e; v_i, \th^\e(v;\cd))|\le C$. This clearly implies that $\dbV_\e$ is bounded.

We next show that, denoting by cl the closure, 
\bea
\label{closure}
cl(\dbV_{\e_1}) \subset \dbV_{\e_2},\q\mbox{for any}~ 0<\e_1 <\e_2.
\eea 
Indeed, let $y\in cl(\dbV_{\e_1})$ and $\d := \e_2-\e_1>0$, by definition there exists $y^\d$ and  $\e_1$-equilibrium $(P^{\e_1}, \th^{\e_1})$ such that $|y-y^\d|\le \d$ and $\sum_{i=1}^N p_i \big|y^\d_i- J(P^\e; v_i, \th^\e(v_i;\cd)\big| \le \e_1$. Then
\beaa
\sum_{i=1}^N p_i \big|y_i- J(P^\e; v_i, \th^\e(v_i;\cd)\big| \le \e_1 + \sum_{i=1}^N p_i \big|y_i- y^\d_i\big| \le \e_1 + \d =\e_2.
\eeaa
Clearly $(P^{\e_1}, \th^{\e_1})\in \cE_{\e_1} \subset \cE_{\e_2}$, then $y \in \dbV_{\e_2}$. 

Now it is clear that $\dbV = \bigcap_{\e>0} cl(\dbV_\e)$. By the boundedness of $\dbV_\e$ we see that $ cl(\dbV_\e)$ is bounded and closed, and hence compact. Then one can easily see that $\dbV$ is compact.

Finally, if $\dbV\neq \emptyset$, then clearly $\dbV_\e\neq \emptyset$ and thus there exists $\e$-equilibrium, for any $\e>0$. On the other hand, assume $\dbV_\e\neq \emptyset$ for all $\e>0$. Fix arbitrary $y_\e\in \dbV_\e$. Note that $\dbV_\e$ is decreasing as $\e\downarrow 0$. Then, for $\e_0:=1$, we have $y_\e\in \dbV_{\e_0}\subset cl(\dbV_{\e_0})$ for all $0<\e\le \e_0$. Since $cl(\dbV_{\e_0})$ is compact, there exists $\e_n \downarrow 0$ such that $y_{\e_n} \to y_*$ for some $y_*\in cl(\dbV_{\e_0})$. Note that, for any $\e>0$, $\e_n<\e$ when $n$ is large enough, then $y_{\e_n}\in \dbV_{\e_n}\subset \dbV_\e$.  Thus $y_*\in cl(\dbV_\e)$. Since $\e>0$ is arbitrary, we have $y_*\in \dbV = \bigcap_{\e>0} cl(\dbV_\e)$, that is, $\dbV \neq \emptyset$.
\qed

\subsection{The value at a bridge type equilibrium}
\label{sect-bridge}
We now turn to the example in Remark \ref{rem-Kyleexample}, which is a classical example in the literature. Recall again that, due to the requirement \reff{thbound}, our definition of equilibrium excludes the $(P^*, \th^*)$ there. However, in this subsection we shall show that $J_0(P^*; \cd; \th^*)$ belongs to the set value $\dbV$, and thus, in terms of values instead of equilibria, our set value covers the classical results in the literature where the insider's equilibrium strategy takes the bridge form and is unbounded. We remark that, although we restrict our discussion to the example in Remark \ref{rem-Kyleexample}, we believe our statement holds true in much more general cases. In particular, this implies that $\dbV \neq \emptyset$ in that case.

However, note that the $V$ in Remark \ref{rem-Kyleexample} has continuous distribution, so we shall first extend the notion of set value to the general case where $V$ is not discrete anymore. We now consider the setting in Subsection \ref{sect-standardsetting}. Let  $\dbL^0_V$ denote the set of measurable functions $J:  Supp(V)\to \dbR$. Define $\cE_\e$ still as in Definition \ref{defn-NEe}, and we modify \reff{dbV} as follows:
\bea
\label{dbV2}
&\dis \dbV := \bigcap_{\e>0} \dbV_\e \subset \dbL^0_V,\q\mbox{where}\\
&\dis \dbV_\e := \Big\{J\in \dbL^0_V: \int_{Supp(V)}\big|J(v)- J(P^\e; v, \th^\e(v;\cd)\big|\mu(dv) \le \e ~ \mbox{for some}~ (P^\e, \th^\e)\in \cE_\e\Big\}.\nonumber
\eea

\begin{prop}
\label{prop-bridge}
Consider the setting in Remark \ref{rem-Kyleexample}.  Then $J_0(P^*; \cd, \th^*) \in \dbV$.
\end{prop}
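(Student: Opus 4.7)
The plan is to present $J_0(P^*;\cdot,\th^*)$ as the limit value of an explicit sequence of approximate equilibria obtained by truncating the unbounded bridge strategy, while keeping the identity pricing rule. Specifically, for each integer $n\ge 1$ I would set $P^n:=P^*$ (so that $P^n_t=Q_t$) and define
$$\th^n(v;t,Q_{[0,t]}) \;:=\; \mathbf{1}_{\{t\le 1-1/n\}}\,\Pi_n\!\Big(\frac{v-Q_t}{1-t}\Big),\qquad \Pi_n(x):=(-n)\vee x\wedge n,$$
so that $|\th^n|\le n$ and $\th^n(v;\cdot)\in\cA'$ for every $v\in\dbR=\mbox{Supp}(V)$.

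The first step is to compute $J(P^*;v,\th')$ for a generic bounded $\th'\in\cA'$ by applying It\^o to $vQ_t-\tfrac12 Q_t^2$ under $\dbP^{\th'}$, where $dQ_t=\th'_t\,dt+dB^{\th'}_t$. Since $\th'$ is bounded, the stochastic integral is a true martingale, giving
$$J(P^*;v,\th') \;=\; \dbE^{\dbP^{\th'}}\big[vQ_1 - \tfrac12 Q_1^2\big]+\tfrac12,$$
and Jensen's inequality yields $\sup_{\th'\in\cA'}J(P^*;v,\th')\le (v^2+1)/2$, with equality in the limit iff $Q_1\to v$ in $L^2$. Next I would show that $J(P^*;v,\th^{n,v})\to J_0(P^*;v,\th^{*,v})=(v^2+1)/2$ pointwise in $v$. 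Letting $\tau_n:=\inf\{t:|(v-Q_t)/(1-t)|>n\}$, on $\{\tau_n\ge 1-1/n\}$ the process $Q$ under $\dbP^{\th^{n,v}}$ follows the genuine bridge dynamics on $[0,1-1/n]$, so $Q_{1-1/n}\to v$ in probability; on the terminal window $(1-1/n,1]$ the strategy is zero and the Gaussian increment has variance $1/n$. Uniform $L^2$ control of $Q_1$ under $\dbP^{\th^{n,v}}$, available from the explicit bridge formulas plus the truncation, upgrades pointwise convergence to convergence in $L^1(\mu)$, verifying both the optimality-gap clause \reff{the} and the value-approximation clause of \reff{dbV2}.

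The hard part will be the price-matching condition \reff{Pe}, which demands $\dbE^\dbP\int_0^1|Q_t-P^{\th^n}_t|^2\,dt\to 0$, where by the Kallianpur--Striebel formula \reff{filter}, $P^{\th^n}_t=\int_\dbR v M^{\th^{n,v}}_t\mu(dv)\big/\int_\dbR M^{\th^{n,v}}_t\mu(dv)$. On the short terminal interval $(1-1/n,1]$ the trivial bound $|Q_t-P^{\th^n}_t|\le |V|+|Q_t|$ combined with the interval length suffices. On $[0,1-1/n]$ I would exploit that under the untruncated bridge $P^{\th^*}_t\equiv Q_t$, split the $v$-integrals into $|v|\le R_n$ and $|v|>R_n$ for a slowly growing $R_n\to\infty$, and use that $\dbP^{\th^{*,v}}(\tau_n<1-1/n)\to 0$ uniformly in $|v|\le R_n$. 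The delicate ingredient is controlling the Kallianpur--Striebel ratio: one needs both numerator and denominator to be close to their untruncated analogues in $L^2(\dbP)$, which in turn requires uniform-in-$v$ estimates on $M^{\th^{*,v}}_t$ integrated against the Gaussian tail of $\mu$.

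Combining the two pieces, for each $\e>0$ a sufficiently large $n=n(\e)$ makes $(P^n,\th^n)$ an $\e$-equilibrium and keeps $\int_\dbR|J_0(P^*;v,\th^{*,v})-J(P^*;v,\th^{n,v})|\mu(dv)\le\e$. Hence $J_0(P^*;\cdot,\th^*)\in\dbV_\e$ for every $\e>0$, and therefore belongs to $\dbV$. The filtering estimate just described is the only substantially technical ingredient; the remainder is It\^o calculus and standard Brownian-bridge moment computations.
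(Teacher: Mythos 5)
Your proposal follows the same broad strategy as the paper --- truncate the unbounded bridge strategy, keep the identity pricing rule $P^*_t=Q_t$, show the truncation is an $\e$-equilibrium whose value converges to $(v^2+1)/2$ --- but the specific truncation you chose creates a gap at exactly the step you yourself flag as ``the delicate ingredient.'' You cap pointwise, $\th^n(v;\cdot)=\mathbf{1}_{\{t\le 1-1/n\}}\Pi_n\big((v-Q_t)/(1-t)\big)$, so the effective cutoff time $\tau_n^v=\inf\{t:|(v-Q_t)/(1-t)|>n\}$ depends on $v$. As a result the family $\{M^{\th^{n,v}}_t\}_v$ has no common stopping structure, and the Kallianpur--Striebel ratio $P^{\th^n}_t$ does not simplify. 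The paper instead truncates by a stopping time of the \emph{observable} path alone, $\t_R := \inf\{t: |Q_t|\ge R\}\wedge(1-R^{-1})$, and sets $\th^R(v;\cdot)=\th^*(v;\cdot)\mathbf{1}_{\{\t_R\ge t\}}$. Since $\t_R$ is independent of $v$, one has $M^{\th^{R,v}}_t=M^{\th^{*v}}_{\t_R\wedge t}$ for every $v$, and the identity $\int v\, M^{\th^{*v}}_s\mu(dv)=B_s\int M^{\th^{*v}}_s\mu(dv)$ (proved in the Appendix) evaluated at $s=\t_R\wedge t$ gives the \emph{exact} formula $P^{\th^R}_t=B_{\t_R\wedge t}$. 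Verifying \reff{Pe} then reduces to $\dbE\int_0^1|B_t-B_{\t_R\wedge t}|^2dt=\tfrac12\dbE[(1-\t_R)^2]=O(R^{-1})$, an elementary Brownian computation.

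With your $\Pi_n$-truncation there is no analogous identity, and the program you sketch --- compare numerator and denominator of $P^{\th^n}_t$ with their untruncated analogues in $L^2(\dbP)$ uniformly in $v$, against the Gaussian tail of $\mu$ --- is left undone and is genuinely problematic: $M^{\th^{*v}}_t$ is not $\dbP$-square-integrable near $t=1$ when $|v|$ is large, and the $v$-dependent cutoff mixes the regimes $\{\tau_n^v>t\}$ and $\{\tau_n^v\le t\}$ inside both integrals of the ratio, so there is no clean decoupling. Note also that insisting on the uniform bound $|\th^n|\le n$ was unnecessary: the paper's $\th^R$ satisfies only $|\th^{R,v}_t|\le(|v|+R)R$, which is bounded for each fixed $v$, and that is all \reff{thbound} requires (the constant $C_{\th'}$ may depend on $\th'$). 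Your stronger cap bought nothing and destroyed the exact filtering formula. The rest of your sketch (the It\^o computation of $J(P^*;v,\th')$, the pointwise optimization giving the $(v^2+1)/2$ bound, the bridge-moment estimate for the optimality gap split over $|v|\le R_n$ and $|v|>R_n$) is essentially the paper's argument and would go through once the truncation is changed to the stopping-time form.
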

\proof Recall Remark \ref{rem-cA} that the strong formulation for $J_0$ and the weak formulation for $J$ are consistent.  We shall use both formulations in this proof, whichever is more convenient. Moreover, we shall use the properties in Remark \ref{rem-cA} (i) as well as  the arguments in its proof in Appendix below.  To meet the requirement \reff{thbound}, we truncate $\th^*$ as in \reff{thR} below. Then $|\th^{R,v}_t| \le [|v|+R] R$ and thus $\th^R\in \cA$. We shall show that  $(P^*, \th^R)$ is an $\e_R$-equilibrium, with $\e_R = C R^{-{1\over 2}}$ for some appropriate constant $C$.
 
  We first verify \reff{Pe}, by using the weak formulation.  By \reff{PthRP*} below we have $P^{\th^R}_t = B_{\t_R\wedge t}$. 
Since $P^*_t = Q_t$ and $Q\equiv B$ is a Brownian motion under $\dbP$, then
\beaa
&&\dbE\Big[\int_0^1 |P^*_t - P^{\th^R}_t |^2dt \Big]= \dbE\Big[\int_0^1 |B_t- B_{\t_R \wedge t}|^2dt\Big] = \dbE\Big[\int_{\t_R}^1 |B_t- B_{\t_R}|^2dt\Big] \\
&&= \dbE\Big[\int_{\t_R}^1\dbE\big[ |B_t- B_{\t_R}|^2\big|\cF_{\t_R}\big]dt\Big]= \dbE\Big[\int_{\t_R}^1 (t- \t_R)dt\Big] ={1\over 2} \dbE\big[(1-\t_R)^2\big]
\eeaa
Note that, for $t> R^{-1}$,
\beaa
\dbP(1-\t_R \ge t)  = \dbP(\t_R \le 1-t) \le   \dbP\big(\sup_{0\le s\le 1-t}|B_s|  \ge R\big) \le {1\over R} \dbE\big[\sup_{0\le s\le 1-t}|B_s|\big]\le {C\over R},
 \eeaa
 and for $t\le R^{-1}$, $\dbP(1-\t_R \ge t)  = \dbP(1-\t_R \ge R^{-1})\le 1$.  Then
 \beaa
 \dbE\Big[\int_0^1 |P^*_t - P^{\th^R}_t |^2dt \Big]= \int_0^1 t\dbP(1-\t_R \ge t) dt \le \int_0^{R^{-1}} tdt + \int_{R^{-1}}^1 {C\over R} dt  \le CR^{-1}.
\eeaa
This verifies \reff{Pe} for $\e_R = CR^{-{1\over 2}}$.

We next verifies \reff{the}. It is more convenient to use the strong formulation in this case.  For any $v$, denote $Q^v_t := Q^{\th^{*v}}_t$, $\hat \th^v_t:= \th^*(v; t, Q^v_{[0, t]})$, and $\t_R^v := \t_R(Q^{v})$. By \reff{bridge3} and \reff{bridge4} in the Appendix we have
\bea
\label{maxJ-J}
 \eta&:=& \int_\dbR \Big[\sup_{\th'\in \cA'} J(P^*; v, \th') - J(P^*; v, \th^R(v;\cd))\Big] \mu(d v)\\
&=& \dbE\int_\dbR \Big[{v^2+1\over 2} - \big[v Q^{\th^R}_1 - {|Q^{\th^R}_1|^2 -1\over 2}\big]\Big] \mu(d v) = {1\over 2}\dbE \int_\dbR  |Q^{\th^R}_1-v|^2 \mu(d v).\nonumber
 \eea
 By \reff{bridge2}  we have
 \bea
 \label{QR-v}
&& \dbE[|Q^{\th^R}_1-v|^2] = \dbE\Big[ \big|v \t_R^v -  (1-\t_R^v) \int_0^{\t_R^v}\frac{dB_r}{1-r} - v\big|^2\Big]\nonumber\\
 &&\le C\dbE\Big[|v|^2 (1-\t_R^v)^2 + (1-\t^v_R)^2 \big(\int_0^{\t^v_R}\frac{dB_r}{1-r}\big)^2\Big]\nonumber\\
 &&\le C|v|^2 \dbE\big[(1-\t^v_R)^2\big] + C\Big(\dbE\big[(1-\t^v_R)^4\big]\Big)^{1\over 2} \Big(\dbE\big[\big(\int_0^{\t^v_R}\frac{dr}{(1-r)^2}\big)^2\big]\Big)^{1\over 2}\nonumber\\
 &&\le C|v|^2 \dbE\big[(1-\t^v_R)^2\big] + C\Big(\dbE\big[(1-\t^v_R)^4\big]\Big)^{1\over 2} \Big(\dbE\big[{1\over (1-\t_R^v)^2}\big]\Big)^{1\over 2}\nonumber\\
 &&\le C|v|^2 \dbE\big[(1-\t^v_R)^2\big] + CR\Big(\dbE\big[(1-\t^v_R)^4\big]\Big)^{1\over 2},
 \eea
where the last inequality used the fact that $1-\t_R \ge R^{-1}$. 
%Note that
% $$
% (1-t)\hat \th^v_t=v-Q_t^v=v-\int_0^t \hat \th^v_sds-B_t.
% $$ 
% Differentiating both sides we get:
%    $$
%    (1-t)d\hat \th^v_t-\hat \th^v_tdt=-\hat \th^v_tdt-dB_t.
%    $$
%Since $\hat \th^v_0=v$, then we have $\hat \th^v_t=v-\int_0^t\frac{dB_s}{1-s}$, and thus
%   \beaa
%  % \label{bridge5}
% Q^v_t=vt-\int_0^t\int_0^s\frac{dB_r}{1-r}ds+B_t.
%    \eeaa
Now for $|v|\le {R\over 2}$ and $t > R^{-1}$, by the first equality in \reff{bridge5} below we have
 \beaa
&& \dbP(1-\t^v_R \ge t)  = \dbP(\t^v_R \le 1-t) \le   \dbP\big(\sup_{0\le s\le 1-t}|B_s| +\int_0^{1-t} \big|\int_0^s {dB_r\over 1-r}\big|ds \ge {R\over 2}\big) \\
&&\le {C\over R^4}\dbE\Big[\sup_{0\le s\le 1}|B_s|^4 + \Big(\int_0^{1-t} \big|\int_0^s {dB_r\over 1-r}\big|ds\Big)^4\Big]\\
&&\le {C\over R^4} \Big[1 + \int_0^{1-t} \big(\int_0^s {dr\over (1-r)^2}\big)^2ds\Big]\le {C\over R^4} \Big[1 + \int_0^{1-t} {ds\over (1-s)^2}\Big]\\
&& \le {C\over R^4}[1+ t^{-1}] \le {C\over R^3}.
 \eeaa
Thus, for $|v|\le {R\over 2}$,
\beaa
%\label{QR-v3}
\dbE[(1-\t^v_R)^4] = 4\dbE\Big[\int_0^1 t^3 \dbP(1-\t_R \ge t) dt \Big] \le 4 R^{-4} + 4\dbE\Big[\int_{R^{-1}}^1  \dbP(1-\t_R \ge t) dt \Big] \le C R^{-3}.
\eeaa
Then, by \reff{QR-v},
\bea
\label{QR-v1}
\dbE[|Q^{\th^R}_1-v|^2]  \le C|v|^2 R^{-{3\over 2}} + CR(R^{-3})^{1\over 2} =  C|v|^2 R^{-{3\over 2}}  + CR^{-{1\over 2}},\q |v|\le {R\over 2}. 
\eea
Moreover, for $|v|>{R\over 2}$, by \reff{QR-v} we have 
\bea
\label{QR-v2}
\dbE[|Q^{\th^R}_1-v|^2]  \le C|v|^2+ CR. 
\eea
Since $V\sim N(0,1)$, then clearly $\int_{|v|> {R\over 2}} [|v|^2 + R] \mu(dv) \le CR^{-{1\over 2}}$. 
Plug \reff{QR-v1} and \reff{QR-v2} into \reff{QR-v}, then by \reff{maxJ-J}  we have: 
\beaa
%\label{P*e}
\eta \le C \int_{|v|\le {R\over 2}} [|v|^2 R^{-{3\over 2}}  + R^{-{1\over 2}}] \mu(dv) + C\int_{|v|> {R\over 2}} [|v|^2 + R] \mu(dv)\le CR^{-{1\over 2}} = \e_R.
 \eeaa
 This verifies \reff{the}, and thus $(P^*, \th^R)$ is an $\e_R$-equilibrium.
  
  Finally,  note that $J_0(P^*; v, \th^{*,v}) = \sup_{\th'\in \cA'}J_0(P^*; v, \th')$. Then 
    \beaa
 \int_\dbR \Big|J_0(P^*; v, \th^{*v})-J_0(P^*; v, \th^{R,v})\Big| \mu(dv) = \eta \le \e_R.
  \eeaa
 This implies that $J_0(P^*;\cd, \th^*) \in \dbV_{\e_R}$ for all $R>1$. Since $\e_R\to 0$ as $R\to\infty$, we obtain $J_0(P^*;\cd, \th^*)\in \dbV$.
  \qed

\section{A level set characterization for the set value}
\label{sect-levelset}
Inspired by \cite{FRZ2021}, in this section we provide a level set characterization for the set value $\dbV$, which can also be viewed as a duality result. See also related works \cite{MaYong, ST} in this spirit. Such a characterization could particularly useful for numerical computation of the set value. For simplicity again we consider the discrete case \reff{discrete}.

Recall \reff{XthSDE} and \reff{BSDEP}. Fix $p =(p_1,\cds, p_N)$ and $y\in \dbR^N$. For any $\th\in \cA$ and $Z=(Z^1,\cds, Z^N)\in (\dbL^2(\dbF^B))^N$, introduce the forward processes: denoting $\th^i:= \th^{v_i}$,
\bea
\label{FSDE}
\left.\ba{lll}
\dis X_t^{p, \th,  i}=p_i+  \int_0^t X^{p,  \th,  i}_s\big[\th^i_s - \bar X^{p,  \th}_s] \big[dB_s - \bar X^{p, \th}_s ds\big],\ms\\
\dis Y^{p,y, \th, Z, i}_t = y_i- \int_0^t H_s\big(\hat Z^{p, \th, Z, i}_s\big) ds + \int_0^t Z^{i}_s dB_s,\ms\\
\dis \mbox{where}\q \bar X^{p, \th}_s:= \sum_{j=1}^N  \th^j_s X^{p, \th,  j}_s,~ \hat Z^{p,  \th, Z, i}_s := v_i + Z^i_s- \sum_{j=1}^N v_j X^{p, \th,  j}_s.
\ea\right.
\eea
Note that $X^{p,\th}$ identifies with the $X^\th$ in \reff{XthSDE}. Since $\th$ is bounded, following the arguments in the proof of Theorem \ref{thm-smallT}, one can show that \reff{FSDE} has a unique solution with $X_t^{p,  \th}\in \D_N$. We then consider the following standard optimization problem with control $(\th, Z)$:
\bea
\label{W}
\left.\ba{c}
\dis W(0,p,y) := \inf_{\th\in \cA, Z\in (\dbL^2(\dbF^B))^N} \dbE^\dbP\Big[\sum_{i=1}^N |Y^{p,y,\th, Z,i}_T|^2 + \sum_{i=1}^N\int_0^T  |h^{p,\th, Z,i}_s|^{4\over 3}ds \Big],\ms\\
\dis\mbox{where}~ h^{p,\th, Z,i}_s := H_s(\hat Z^{p, \th, Z, i}_s) - \hat Z^{p,  \th, Z, i}_s \th^i_s + f_s(\th^i_s).
\ea\right.
\eea
%\bea
%\label{W}
%\left.\ba{c}
%\dis W(0,p,y) := \inf_{\th\in \cA, Z\in (\dbL^2(\dbF^B))^N}\sum_{i=1}^N p_i \cJ_i(p,y,\th, Z),\q\mbox{where}\\
%\dis \cJ_i(p,y,\th, Z):= \dbE^\dbP\Big[ |Y^{p,y,\th, Z, i}_T|^2 +\int_0^T \big|H_s(\hat Z^{p, \th, Z, i}_s) - \hat Z^{p,  \th, Z, i}_s \th^i_s + f_s(\th^i_s)\big|^{4\over 3}ds \Big].
%\ea\right.
%\eea
Here, the power ${4\over 3}$ is for technical reason, see \reff{Ehest} below, and can be replaced with any order strictly between $1$ and ${3\over 2}$. 
Note that, if $(P^*, \th^*)$ is an equilibrium with corresponding solution $(X, Y, Z)$ to FBSDE \reff{FBSDE}, then $Y_0 \in \dbV$ and $W(0, p, Y_0)=0$. Our duality result characterizes $\dbV$ by the zero level set of $W$. For technical reason, we assume $A$ is bounded, namely all $\th\in \cA$ are uniformly bounded. Note that, under Assumption \ref{assum-H}, by \reff{envelope} we see that all equilibria $\th^*$ are uniformly bounded, so this boundedness assumption on $A$ is not that serious. 

\begin{thm}
\label{thm-duality}
Let  \reff{discrete} and Assumption \ref{assum-H} hold true, and assume the control set $A$ is bounded. Then, for fixed $p$, 
\bea
\label{duality}
\dbV = \dbN(0, p):= \big\{y\in \dbR^N: W(0, p, y) = 0\big\}.
\eea
\end{thm}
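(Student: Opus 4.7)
The plan is to prove the two inclusions $\dbV \subset \dbN(0,p)$ and $\dbN(0,p) \subset \dbV$ via two symmetric constructions converting between approximate equilibria and near-minimizers of $W$. The key observation, obtained by comparing \reff{FSDE} with \reff{FBSDE}, is that any $(\th^*, Z^*)$ realizing $W(0,p,y)=0$ would make $(X^{p,\th^*}, Y^{p,y,\th^*,Z^*}, Z^*)$ a solution to the FBSDE \reff{FBSDE} with $Y_0=y$, which by Theorem \ref{thm-equilibrium} corresponds to a true equilibrium with insider value $y$. The two terms in $W$ -- the terminal defect $|Y^{p,y,\th,Z,i}_T|^2$ and the Hamiltonian gap $h^{p,\th,Z,i}_s \ge 0$ -- are precisely the obstructions to solving \reff{FBSDE}. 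Since $\dbV$ and $\dbN(0,p)$ both relax these exact conditions to approximate ones, the task reduces to showing that the two notions of approximation are equivalent.

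For $\dbV \subset \dbN(0,p)$, fix $y \in \dbV$. For each $\e>0$ choose $(P^\e, \th^\e) \in \cE_\e$ with $\sum_i p_i |y_i - J(P^\e; v_i, \th^{\e,v_i})| \le \e$, and plug $(\th^\e, Z^\e)$ into $W$, where $Z^{\e,i}$ is the $Z$-component of BSDE \reff{BSDEP} taken with the rational pricing rule $P = P^{\th^\e}$ (not $P^\e$ itself). Then $\hat Z^{p,\th^\e,Z^\e,i} = v_i - P^{\th^\e} + Z^{P^{\th^\e},i}$ matches the BSDE argument exactly, so the forward $Y$-equation collapses telescopically to the deterministic constant $Y^{p,y,\th^\e,Z^\e,i}_T = y_i - Y^{P^{\th^\e},i}_0$. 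A triangle inequality -- combining the hypothesis, a BSDE stability bound $|Y^{P^\e,i}_0 - Y^{P^{\th^\e},i}_0| \le C\e$ via \reff{Pe}, the $\e$-equilibrium sub-optimality at $P^\e$ from \reff{the}, and a Cauchy-Schwarz drift perturbation $|J(P^\e;v_i,\th^{\e,v_i}) - J(P^{\th^\e};v_i,\th^{\e,v_i})| \le C\e$ using \reff{Pe} and boundedness of $A$ -- yields $|y_i - Y^{P^{\th^\e},i}_0| = O(\e)$, so the first term in $W$ vanishes as $\e\to 0$. The same chain gives sub-optimality of $\th^\e$ for the rational price $P^{\th^\e}$ of order $\e$; by the control-representation identity
\[
\dbE^{\dbP^{\th^{\e,v_i}}} \int_0^T h^{p,\th^\e,Z^\e,i}_s \, ds = Y^{P^{\th^\e},i}_0 - J(P^{\th^\e};v_i,\th^{\e,v_i}) = O(\e),
\]
this is an $L^1$-bound on $h$ under $\dbP^{\th^{\e,v_i}}$. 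Since $Z^{P^{\th^\e},i}$ is in BMO (BSDE with terminal $0$, Lipschitz driver, and bounded free term $H(v_i - P^{\th^\e})$ because $V$ is bounded), $h$ admits uniform $L^r$-bounds under $\dbP^{\th^{\e,v_i}}$ for all $r<\infty$. A Lyapunov interpolation then gives $\dbE^{\dbP^{\th^{\e,v_i}}}\int h^{4/3} ds = O(\e^{2/3})$, and a final Hölder change of measure to $\dbP$, using the uniform $L^p$-moments of $(M^{\th^{\e,v_i}})^{\pm 1}$ available because $A$ is bounded, yields $\dbE^\dbP \int h^{4/3} ds = o(1)$. Letting $\e\to 0$ proves $W(0,p,y)=0$.

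For $\dbN(0,p) \subset \dbV$, fix $y$ with $W(0,p,y)=0$ and choose a minimizing sequence $(\th^n, Z^n)$; set $P^n := P^{\th^n} = \sum_i v_i X^{p,\th^n,i}$, so that \reff{Pe} holds with zero error. To verify \reff{the}, compare $Z^n$ with the BSDE control $Z^{P^n,i}$ via $\Delta Y_t := Y^{p,y,\th^n,Z^n,i}_t - Y^{P^n,i}_t$, whose dynamics read $d\Delta Y_t = (Z^{n,i} - Z^{P^n,i})(dB_t - \psi^{n,i}_t dt)$ for some bounded $\psi^{n,i}$ obtained from the Lipschitz ratio of $H(v_i - P^n + Z^{P^n,i}) - H(\hat Z^{n,i})$, making $\Delta Y$ a martingale under an equivalent measure $\tilde \dbP^n$. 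Then $|\Delta Y_0| = |\dbE^{\tilde \dbP^n}[\Delta Y_T]| \le C(\dbE^\dbP |Y^{p,y,\th^n,Z^n,i}_T|^2)^{1/2} \to 0$, and the It\^o isometry under $\tilde\dbP^n$ gives $\dbE^{\tilde \dbP^n}\int_0^T |Z^{n,i} - Z^{P^n,i}|^2 dt \to 0$. Converting these $L^2$-bounds back to $\dbP^{\th^{n,v_i}}$ by Hölder, one may replace $Z^{P^n,i}$ by $Z^{n,i}$ (up to $o(1)$) in the sub-optimality formula $Y^{P^n,i}_0 - J(P^n;v_i,\th^{n,v_i}) = \dbE^{\dbP^{\th^{n,v_i}}} \int_0^T h^{P^n,i}_s ds$; combined with $\dbE^\dbP \int |h^{n,i}|^{4/3} ds \to 0$ (which transfers to $\dbE^{\dbP^{\th^{n,v_i}}} \int h^{n,i} ds \to 0$ by a reverse Hölder), the sub-optimality is $o(1)$. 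Hence $(P^n,\th^n) \in \cE_{\e_n}$ with $\sum_i p_i|y_i - J(P^n;v_i,\th^{n,v_i})| \to 0$, placing $y \in \dbV_{\e_n}$ eventually, whence $y \in \dbV$.

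The main technical obstacle is threading the chain of $L^p$-conversions between the insider's equivalent measure $\dbP^{\th^{\e,v_i}}$ and the base measure $\dbP$: the Hamiltonian-gap bound emerges naturally in $L^1$ under the former (from $\e$-optimality in a linear BSDE), while $W$ is defined in $L^{4/3}$ under the latter. The exponent $4/3$ (or any exponent in $(1, 3/2)$) in \reff{W} is precisely tuned so that a Lyapunov interpolation between the $L^1$-gap and the uniform BMO-based $L^r$-bound on $h$ produces a positive power of $\e$, while a subsequent Hölder step absorbs the change of measure using the uniform $L^p$-moments of $M^{\th^\e}$ guaranteed by the boundedness of $A$. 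All other ingredients -- BSDE stability, Bayes' formula $P^{\th^n} = \sum_i v_i X^{p,\th^n,i}$, and the martingale trick for $\Delta Y$ -- are standard.
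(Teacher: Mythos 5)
Your proposal follows the same two-inclusion structure as the paper and works with the same key quantities: the nonnegative Hamiltonian gap $h$, its $L^1$-control under the insider's measure via the ``control-representation identity'', its $L^2$-control under $\dbP$ from BSDE estimates, the $L^p$-moments of $M^{\th}$ guaranteed by bounded $A$, and the interpolation that the exponent $4/3$ in \reff{W} was designed to enable. Two genuine differences from the paper and two technical glitches are worth flagging.

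\textbf{What you do differently.} In the direction $\dbV\subset\dbN(0,p)$ you take $Z^\e = Z^{P^{\th^\e},i}$, whereas the paper takes $\cZ^{\e,i}=Z^{P^\e,i}$, the $Z$-component of \reff{BSDEP} evaluated at the given (approximate) pricing rule $P^\e$ rather than at the rational rule $P^{\th^\e}$. Your choice makes $Y^{p,y,\th^\e,Z^\e,i}_T$ equal to the \emph{deterministic} constant $y_i - Y^{P^{\th^\e},i}_0$, which is a clean observation not used in the paper (there $Y_T$ is random and is estimated by pairing the forward $Y^i$-equation with the BSDE for $\cY^{\e,i}$, picking up an error $C\int_0^T|P^{\th^\e}_s-P^\e_s|\,ds$). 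The price of your choice is a longer chain of comparisons --- BSDE stability between $Y^{P^\e,i}$ and $Y^{P^{\th^\e},i}$ and a drift-perturbation bound between $J(P^\e;\cdot)$ and $J(P^{\th^\e};\cdot)$ --- but both are legitimate uses of \reff{Pe}, boundedness of $V$, and boundedness of $A$. In the other direction you compare $Y^{p,y,\th^n,Z^n,i}$ directly to $Y^{P^n,i}$ and handle the sub-optimality separately through $h$; the paper instead introduces two auxiliary BSDEs $(\tilde Y^\e,\cY^\e)$ and subtracts. This is organizational rather than substantive.

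\textbf{Glitch 1 (It\^o isometry under $\tilde\dbP^n$).} You claim $\dbE^{\tilde\dbP^n}\int_0^T|Z^{n,i}-Z^{P^n,i}|^2\,dt\to 0$ from $\dbE^\dbP|\D Y_T|^2\to 0$. That change of measure on $|\D Y_T|^2$ costs a higher ($L^4$) moment of $Y_T$ that is not in your data. The correct route is to stay under $\dbP$: $\D Y$ solves a linear BSDE with a uniformly bounded coefficient $\psi^{n,i}$, so the standard linear BSDE a priori estimate gives $\dbE^\dbP\big[|\D Y_0|^2+\int_0^T|\D Z_s|^2\,ds\big]\le C\,\dbE^\dbP|\D Y_T|^2$, with no change of measure needed. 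Easily repaired.

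\textbf{Glitch 2 (the two-step interpolation).} Your plan to first interpolate under $\dbP^{\th^{\e,v_i}}$ to get $\dbE^{\dbP^\th}\int h^{4/3}=O(\e^{2/3})$ and then H\"older back to $\dbP$ does not close. The final change of measure $\dbE^\dbP\int h^{4/3}=\dbE^{\dbP^\th}\big[M^{-1}\int h^{4/3}\big]$ requires, after H\"older, an $L^{q}$-moment ($q>1$) of $\int h^{4/3}$ under $\dbP^\th$, and such a moment is $O(1)$, not $o(1)$: all the smallness you built up gets lost. (Also, the claim that BMO of $Z^{P^{\th^\e},i}$ gives uniform $L^r$-bounds on $h$ for all $r<\infty$ is an overclaim --- John--Nirenberg controls moments of $\int|Z|^2\,dt$, not of $\int|Z|^r\,dt$ for $r>2$ --- though you only need $r=2$.) The paper executes a single three-factor H\"older entirely under $\dbP$, writing $\int h^{4/3}\,ds = \int M_T^{-1/3}\,|h_s|\,(M_T|h_s|)^{1/3}\,ds$ and applying H\"older with exponents $(6,2,3)$:
\begin{equation*}
\dbE^\dbP\!\int_0^T |h_s|^{4/3}\,ds \;\le\; C\,\big(\dbE^\dbP[M_T^{-2}]\big)^{1/6}\big(\dbE^\dbP\!\int_0^T|h_s|^{2}\,ds\big)^{1/2}\big(\dbE^\dbP\big[M_T\!\int_0^T h_s\,ds\big]\big)^{1/3}\;\le\; C\e^{1/3},
\end{equation*}
so that the change of measure is absorbed into the last factor $\dbE^\dbP[M_T\int h]=\dbE^{\dbP^\th}\int h=O(\e)$. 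Your ingredients are all correct; the lesson is that the interpolation and the change of measure have to be done in one shot.
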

\proof We proceed in two steps. Denote
\bea
\label{cJi}
\cJ_i(p,y,\th, Z):= \dbE^\dbP\Big[ |Y^{p,y,\th, Z, i}_T|^2 +\int_0^T |h^{p, \th, Z, i}_s|^{4\over 3}ds \Big].
\eea

\no{\bf Step 1.} We first prove "$\supset$". Fix $y\in\dbN(0,p)$. For any $\e>0$, there exist $(\th^\e, Z^\e)$ such that, 
\bea
\label{supset1}
\cJ_i(p,y,\th^\e, Z^\e) \le \e^2,\q i=1,\cds, N.
\eea
Denote $X^\e:= X^{p,  \th^\e}$, $Y^\e:= Y^{p, y, \th^\e, Z^\e}$, $\bar X^\e := \bar X^{p,  \th^\e}$, $\hat Z^\e:= \hat Z^{p, \th^\e, Z^\e}$, $h^{\e,i} := h^{p, \th^\e, Z^\e, i}$, and recall $P^{\th^\e}_t := \sum_{i=1}^N v_i X^{\e,i}_t$. We shall show that $(P^{\th^\e}, \th^\e)$ is an $C\e$-equilibrium for some constant $C>0$.

Indeed, \reff{Pe} is automatically true. We now fix $P^\e:= P^{\th^\e}$ and verify \reff{the}. Recall \reff{BSDEth}, \reff{YP}, and \reff{BSDEP}, we have
\bea
\label{supset2}
J(P^\e; v_i, \th^{\e,i}) = \tilde Y^{\e,i}_0,\q \sup_{\th'\in \cA'} J(P^\e; v_i, \th') = \cY^{\e,i}_0,
\eea
where, $(\tilde Y^{\e,i}, \tilde Z^{\e,i}) := (Y^{P^\e, i, \th^{\e,i}}, Z^{P^\e, i, \th^{\e,i}})$ and $(\cY^\e, \cZ^\e) := (Y^{P^\e}, Z^{P^\e})$ satisfy BSDEs: 
\bea
\label{tildeBSDEc}
\left.\ba{c}
\dis \tilde Y^{\e,i}_t = \int_t^T \Big[\hat{\tilde Z}^{\e,i}_s \th^{\e,i}_s -  f_s(\th^{\e,i}_s) \Big]ds - \int_t^T \tilde Z^{\e,i}_s dB_s,\q \hat{\tilde Z}^{\e,i}_s:= v_i- P^\e_s + \tilde Z^{\e,i}_s;\\
\dis \cY^{\e,i}_t = \int_t^T H_s(\hat\cZ^{\e,i}_s)ds - \int_t^T \cZ^{\e,i}_s dB_s,\q \hat\cZ^{\e,i}_s := v_i - P^\e_s + \cZ^{\e,i}_s.
\ea\right.
\eea
Denote $\D \tilde Y^\e := Y^\e -  \tilde Y^\e$,  $\D \tilde Z^\e :=  Z^\e - \tilde Z^\e$, $\D \cY^\e := Y^\e - \cY^\e$,  $\D \cZ^\e := Z^\e - \cZ^\e$. Then, by the above BSDEs and \reff{FSDE}, for $i=1,\cds, N$ we have
\bea
\label{DtildeBSDEc}
\left.\ba{c}
\dis \D \tilde Y^{\e,i}_t = Y^{\e,i}_T + \int_t^T \big[H_s(\hat Z^{\e,i}_s) - \hat{\tilde Z}^{\e,i}_s \th^{\e,i}_s +  f_s(\th^{\e,i}_s)\big]ds -\int_t^T \D \tilde Z^{\e,i}_s dB_s;\\
\dis \D \cY^{\e,i}_t = Y^{\e,i}_T + \int_t^T \big[H_s(\hat Z^{\e,i}_s) - H_s(\hat \cZ^{\e,i}_s)\big]ds -\int_t^T \D \cZ^{\e,i}_s dB_s.
\ea\right.
\eea
Note that $\hat Z^\e_s-\hat \cZ^\e_s = \D \cZ^\e_s$. Since $H$ is uniformly Lipschitz continuous, by standard BSDE estimates and \reff{supset1} we obtain from the second BSDE above that
\bea
\label{DcYest}
|\D \cY^{\e,i}_0| \le C\Big(\dbE\big[|Y^{\e,i}_T|^2\big]\Big)^{1\over 2} \le C\Big({\e^2\over p_i}\Big)^{1\over 2} \le C\e.
\eea
Moreover, note that
\beaa
H_s(\hat Z^{\e,i}_s) - \hat{\tilde Z}^{\e,i}_s \th^{\e,i}_s +  f_s(\th^{\e,i}_s) = \th^{\e,i}_s\D \tilde Z^{\e,i}_s+ h^{\e,i}_s. %  h^i_s:= H_s(\hat Z^{\e,i}_s) - \hat Z^{\e,i}_s \th^{\e,i}_s +  f_s(\th^{\e,i}_s) .
\eeaa
Then from the first BSDE in \reff{DtildeBSDEc} we have 
$
\D \tilde Y^{\e,i}_0 = \dbE\Big[ M^{\th^{\e,i}} \big[Y^{\e,i}_T +  \int_0^T   h^{\e,i}_s ds\big]\Big],
$
and thus
\beaa
\big|\D \tilde Y^{\e,i}_0\big| = \Big(\dbE\big[|M^{\th^{\e,i}}|^2\big]\Big)^{1\over 2}\Big(\dbE\big[|Y^{\e,i}_T|^2\big]\Big)^{1\over 2} + \Big(\dbE\big[|M^{\th^{\e,i}}|^4\big]\Big)^{1\over 4}\Big(\dbE\big[\int_0^T | h^{\e,i}_s|^{4\over 3} ds\big]\Big)^{3\over 4}.
\eeaa
Since $A$ is bounded, and thus $\th^\e$ is uniformly bounded. Then $\dbE\big[|M^{\th^{\e,i}}|^4\big]\le C$, and therefore 
\bea
\label{supset4}
\big|\D \tilde Y^{\e,i}_0\big|  \le C[\e + \e^{3\over 2}] \le C\e.
\eea
This, together with \reff{supset2}  and \reff{DcYest}, implies that
\beaa
0\le \sup_{\th'\in \cA'} J(P^\e; v_i, \th')  - J(P^\e; v_i, \th^{\e,i}) = \D \tilde Y^{\e,i}_0 - \D \cY^{\e,i}_0 \le C\e,
\eeaa
and thus 
\bea
\label{supset3}
\sum_{i=1}^N p_i \Big[\sup_{\th'\in \cA'} J(P^\e; v_i, \th')  - J(P^\e; v_i, \th^{\e,i})\Big]  \le C\e.
\eea
This verifies \reff{the} for $\th^\e$, and hence $(P^\e, \th^\e)$ is an $C\e$-equilibrium.

Moreover, by \reff{supset4} we have
\beaa
\big|y_i - J(P^\e; v_i, \th^{\e,i})\big| =\big|Y^{\e,i}_0-\tilde Y^{\e,i}_0\big|= |\D \tilde Y^{\e,i}_0| \le C\e.
\eeaa
This implies that $y\in \dbV_{C\e}$. Since $\e>0$ is arbitrary, we have $y\in \dbV$. That is, $\dbV \supset \dbN(0, p)$.

\ms
\no{\bf Step 2.} We next prove "$\subset$". Fix $y\in \dbV$. For any $\e>0$, there exists $(P^\e, \th^\e)\in \cE_\e$ such that $\sum_{i=1}^N p_i \big|y_i- J(P^\e; v_i, \th^{\e,i})\big| \le \e$, where $\th^{\e,i} := \th^\e(v_i;\cd)$. 
%In the rest of the proof we will abuse the notations with those in Step 1.

Let $(\tilde Y^\e, \tilde Z^\e)$ and $(\cY^\e, \cZ^\e)$ be the same as in Step 1, but corresponding to $(P^\e, \th^\e)$ here. In particular, \reff{supset2} and \reff{tildeBSDEc} remain true. We shall use $(\th^\e, \cZ^\e)$ as an approximate optimal control for $W(0, p, y)$, namely we shall estimate 
\bea
\label{cJiest}
\cJ_i(p, y, \th^\e, \cZ^\e)  := \dbE^{\dbP}\Big[ |Y^{p,y,\th^\e, \cZ^\e, i}_T|^2 +\int_0^T |h^{p,\th^\e, \cZ^\e, i}_s|^{4\over 3}ds \Big].
%&\dis \mbox{where}~ h^i_s := H_s(\hat Z^{\e, i}_s) - \hat Z^{\e, i}_s \th^{\e,i}_s + f_s(\th^{\e,i}_s)\ge 0,\q \hat Z^{\e, i}_s:= v_i - P^{\th^\e}_s + \cZ^{\e,i}_s.\nonumber
\eea
Denote $h^{\e}_s:= h^{p,\th^\e, \cZ^\e}_s$. We emphasize that, by \reff{W} $h^{\e, i}_s$ relies on $\hat Z^{\e, i}_s:= v_i - P^{\th^\e}_s + \cZ^{\e,i}_s$, which is different from the $\hat \cZ^\e$ in \reff{tildeBSDEc}. In particular, we have
\bea
\label{hatZ-Z}
\hat \cZ^{\e,i}_s- \hat Z^{\e,i}_s = P^{\th^\e}_s - P^\e_s.
\eea

Denote $\D Y^\e := \cY^\e - \tilde Y^\e$, $\D Z^\e:= \cZ^\e - \tilde Z^\e$. Then, by \reff{the},
\bea
\label{initialest}
\D Y^{\e,i}_t\ge 0, \q \sum_{i=1}^N p_i \D Y^{\e,i}_0 \le \e,\q \mbox{and thus}\q \sum_{i=1}^N p_i|y_i- \cY^{\e,i}_0| \le 2\e.
\eea
 By \reff{tildeBSDEc} and \reff{W} we have 
\beaa
&&\dis \D Y^{\e,i}_t =\int_t^T \Big[H_s(\hat\cZ^{\e,i}_s) - \hat{\tilde Z}^{\e,i}_s \th^{\e,i}_s + f_s(\th^{\e,i}_s)\Big]ds - \int_t^T \D Z^{\e,i}_s  dB_s \\
&&=\int_t^T \Big[\th^{\e,i}_s \D Z^{\e,i}_s + H_s(\hat \cZ^{\e,i}_s) - H_s(\hat Z^{\e,i}_s) - [\hat \cZ^{\e,i}_s- \hat Z^{\e,i}_s]\th^{\e,i}_s  + h^{\e,i}_s\Big]ds - \int_t^T \D Z^{\e,i}_s dB_s.
\eeaa
Then, by uniform boundedness of $\th^\e$ and $\pa_z H$, and by \reff{hatZ-Z}, we have
\beaa
\dis {\e\over p_i} &\ge& \D Y^{\e,i}_0 = \dbE\Big[M^{\th^{\e,i}}_T\big[ \int_0^T \big[H_s(\hat \cZ^{\e,i}_s) - H_s(\hat Z^{\e,i}_s) -[\hat \cZ^{\e,i}_s- \hat Z^{\e,i}_s]\th^{\e,i}_s  + h^{\e,i}_s\big] ds\big]\Big]\\
 &\ge& \dbE\Big[ M^{\th^{\e,i}}_T\int_0^T h^{\e,i}_t dt \Big] - C\dbE\Big[ M^{\th^{\e,i}}_T\int_0^T |P^{\th^\e}_t - P^\e_t|\big] dt\Big].
 \eeaa
 Again since $\th^\e$ is uniformly bounded, then by \reff{Pe} we have
\beaa
\dbE\Big[ M^{\th^{\e,i}}_T\int_0^T |P^{\th^\e}_t - P^\e_t|\big] dt\Big] \le \Big(\dbE\big[|M^{\th^{\e,i}}|^2\big]\Big)^{1\over 2}\Big(\dbE\big[\int_0^T |P^{\th^\e}_t - P^\e_t|^2\big] dt\big]\Big)^{1\over 2} \le C\e,
\eeaa
Thus
\beaa
%\label{Ehest1}
\dbE\Big[ M^{\th^{\e,i}}_T\int_0^T h^{\e,i}_t dt \Big] \le C\e.
\eeaa
Since $V$ is bounded, $P^{\th^\e}$ is bounded, and thus by \reff{Pe} we have $\dbE\big[\int_0^T |P^\e_t|^2dt\big] \le C$. Since $\th^\e$ and $\pa_z H$ are uniformly bounded, by \reff{tildeBSDEc} we have
\beaa
\dbE\Big[\int_0^T |\cZ^\e_t|^2dt\Big] \le C,\q\mbox{and thus}\q \dbE\Big[\int_0^T |h^{\e,i}_t|^2dt\Big] \le C,
\eeaa
Then, noting that $\dbE\big[|M^{\th^{\e,i}}|^{-2}\big]\le C$,
\bea
\label{Ehest}
 &&\dbE\Big[\int_0^T |h^{\e,i}_s|^{4\over 3}ds\Big]  = \dbE\Big[(M_T^{\th^{\e,i}})^{-{1\over 3}}\int_0^T |h^{\e,i}_s| \times |M_T^{\th^{\e,i}}h^{\e,i}_s|^{1\over 3}ds\Big]\nonumber\\
 &&\le \Big(\dbE\big[|M^{\th^{\e,i}}|^{-2}\big]\Big)^{1\over 6}\Big(\dbE\big[\int_0^T |h^{\e,i}_s|^2dt\big]\Big)^{1\over 2} \Big(\dbE\big[ M^{\th^{\e,i}}_T\int_0^T h^{\e,i}_t dt \big]\Big)^{1\over 3} \le C \e^{1\over 3}.
\eea

Moreover, note that
\beaa
&&\dis Y^{\e,i}_t:= Y^{p,y, \th^\e, \cZ^\e, i}_t = y_i- \int_0^t H_s\big(v_i - P^{\th^\e}_s + \cZ^{\e,i}_s\big) ds + \int_0^t \cZ^{\e, i}_s dB_s;\\
&&\dis \cY^{\e,i}_t = \cY^{\e,i}_0 - \int_0^t H_s\big(v_i - P^\e_s + \cZ^{\e,i}_s\big) ds + \int_0^t \cZ^{\e, i}_s dB_s
\eeaa
Since $\cY^{\e,i}_T=0$ and $\pa_z H$ is bounded, by standard SDE estimates and \reff{Pe} we have
\beaa
\dbE\Big[|Y^{\e,i}_T|^2\Big] = \dbE\Big[|Y^{\e,i}_T- \cY^{\e,i}_T|^2\Big] \le C \dbE\Big[|y_0-\cY^{\e,i}_0|^2 + \int_0^T |P^{\th^\e}_s - P^\e_s|^2ds\Big] \le C\e^2.
\eeaa
This, together with \reff{Ehest}, implies that $\cJ_i(p, y, \th^\e, \cZ^\e)\le C\e^{1\over 3}$. Then $W(0, p, y) \le  C\e^{1\over 3}$. Since $\e>0$ is arbitrary, we obtain $W(0, p, y) = 0$. Thus $\dbV\subset \dbN(0, p)$.
\qed

\begin{rem}
\label{rem-W}
(i) In the Markov case \reff{Markov}, we may extend $W$ to $W(t,x,y)$ for $t\in [0, T]$. By \reff{FSDE} and \reff{W} it is clear that $W$ is the unique viscosity solution of the following standard HJB equation: denoting $v := (v_1,\cds, v_N)\in \dbR^N$,
\bea
\label{WHJB}
\left\{\ba{lll}
\dis \pa_t W + \sup_{\th\in A^N, z\in \dbR^N} \Big[{1\over 2} \sum_{i,j=1}^N \big[x_i x_j(\th_i- \th \cd x)(\th_j-\th\cd x) \pa_{x_i x_j} W+ x_i(\th_i-\th\cd x) z_j \pa_{x_iy_j}W \\
\dis \qq  + z_i z_j \pa_{y_i y_j}W\big]  + \sum_{i=1}\big[-x_i (\th_i-\th\cd x) (\th \cd x) \pa_{x_i} W  + z_i \pa_{y_i} W \\
\dis \qq + |H(t, v_i + z_i - v\cd x) - \th_i ( v_i + z_i - v\cd x) + f(t,\th_i)|^{4\over 3}\Big] =0;\\
\dis W(T, x, y) = |y|^2.
\ea\right.
\eea
In the general non-Markov case, similar result holds by considering  path dependent PDEs, see \cite{Zhang2017}.

(ii) Recall Theorem \ref{thm-equilibrium} that a (strong) solution of FBSDE \reff{FBSDE} corresponds to an equilibrium and thus $W(0, p, Y_0)=0$. We may also consider weak solutions of FBSDE \reff{FBSDE}, as proposed in \cite{MaZhang, MaZhangZheng, WangZhang}. That is, we allow $Z$ to be adapted to a filtration larger than $\dbF^B$, and  the BSDE in \reff{FBSDE} contains an additional martingale $\cM$ orthogonal to $B$:
\bea
\label{mgorthogonal}
Y^{i}_t = \int_t^T H_s\big(\hat Z^{i}_s\big) ds - \int_t^T Z^{i}_s dB_s + \cM_T - \cM_t.
\eea 
Correspondingly, we rewrite the second equation in \reff{FSDE} as:
\beaa
Y^{x,y, \th, Z, i}_t = y_i- \int_0^t H_s\big(\hat Z^{x, \th, Z, i}_s\big) ds + \int_0^t Z^{i}_s dB_s + \cM_t,
\eeaa
and define $\tilde W(0,x,y)$ as in \reff{W} with controls $(\th, Z, \cM)$. One can show that $\tilde W$ satisfies the same PDE \reff{WHJB} (or corresponding path dependent PDE). Then, by the uniqueness of the PDE, we have $\tilde W = W$.  Consequently, if FBSDE \reff{FBSDE} has a weak solution $(X, Y, Z, \cM)$, then $W(0, p, Y_0) = \tilde W(0, p, Y_0) = 0$, and therefore $Y_0\in \dbV$. That is, if the FBSDE \reff{FBSDE} has a strong solution, then the game has an equilibrium, while if the FBSDE \reff{FBSDE} has a weak solution, then the game has a value: $\dbV \neq \emptyset$. We leave the details to the interested readers.
\end{rem}

\section{Appendix}
\label{sect-Appendix}

\no{\bf Proof of Remark \ref{rem-Kyleexample} (i).} We proceed in three steps.

\no{\bf Step 1.} We first show that, given $P^*$, for each $v\in Supp(V)$, $\th^{*v}$ is the insider's optimal strategy. We shall use the strong formulation. Indeed, for any $\th'$, note that $Q^{\th'}_t = \int_0^t \th'_s ds + B_t$ and recall $T=1$. Then, given $P^*_t(Q_{[0,t]}) = Q_t$,  by applying the It\^{o} formula we have:
\bea
\label{bridge3}
J_0(P^*; v, \th') = \dbE\Big[\int_0^T (v- Q^{\th'}_t) d Q^{\th'}_t\Big] =  \dbE\Big[v Q^{\th'}_T - {|Q^{\th'}_T|^2 - 1\over 2}\Big].
\eea
By the pointwise optimization in the right side above we see that the optimal argument $Q^*_T = v$, which can be achieved by the $\th^{*v}$ in \reff{bridge1}, with corresponding value
\bea
\label{bridge4}
J_0(P^*; v, \th^{*v}) = \sup_{\th'\in \cA'} J_0(P^*; v, \th')  = {v^2+1\over 2}.
\eea
 Thus $\th^*$ is the insider's optimal strategy corresponding to the $P^*$.

\no{\bf Step 2.} We next verify  \reff{bridge2}. Denote $Q^v_t := Q^{\th^{*v}}_t$ and $\hat \th^v_t:= \th^*(v; t, Q^v_{[0, t]})$, $t<1$, we have
 $$
 (1-t)\hat \th^v_t=v-Q_t^v=v-\int_0^t \hat \th^v_sds-B_t.
 $$ 
 Differentiating both sides we get:
    $$
    (1-t)d\hat \th^v_t-\hat \th^v_tdt=-\hat \th^v_tdt-dB_t.
    $$
Note that $\hat \th^v_0=v$, then we have $\hat \th^v_t=v-\int_0^t\frac{dB_s}{1-s}$, and thus
   \bea
   \label{bridge5}
 Q^v_t=vt-\int_0^t\int_0^s\frac{dB_r}{1-r}ds+B_t= vt-\int_0^t\int_r^t {ds\over 1-r} dB_r +B_t=vt- (1-t)\int_0^t\frac{dB_r}{1-r}.
    \eea
 
\no{\bf Step 3.} We finally show that $P^* = P^{\th^*}$. This, together with Step 1, implies that $(P^*, \th^*)$ is an equilibrium and hence completes the proof.

We shall first use the weak formulation, in particular \reff{filter}, to verify \reff{Pth0}. For any $t<1$, by \reff{Girsanov} one may easily get
\beaa
&\dis  M^{\th^{*v}}_t:= \exp\Big(-{t\over 2(1-t)} v^2 + {\xi_t\over 1-t} v - \zeta_t\Big),\q\mbox{where}\\
&\dis  \xi_t :=  \int_0^t {dB_s\over 1-s} + \int_0^t {B_s\over (1-s)^2}ds = {B_t\over 1-t},\qq \zeta_t:= \int_0^t {B_s\over 1-s} dB_s +{1\over 2} \int_0^t {B_s^2\over (1-s)^2} ds.
\eeaa
Since $V\sim N(0,1)$, then
\beaa
 M^{ \th^{*v}}_t \mu(dv) = {1\over \sqrt{2\pi}} e^{-{1\over 2(1-t)} v^2 + {B_t\over 1-t} v - \zeta_t} dv =  {1\over \sqrt{2\pi}} e^{-{1\over 2(1-t)} (v - B_t)^2 + {B_t^2\over 2(1-t)} - \zeta_t} dv.
 \eeaa
 This implies that $\int_\dbR \big(v-B_t\big) M^{\th^{*v}}_t \mu(dv)=0$, and thus
\bea
\label{vM=BM}
 \int_\dbR v M^{\th^{*v}}_t \mu(dv)  =  B_t\int_\dbR  M^{\th^{*v}}_t \mu(dv).
\eea

However, in order to apply the Girsanov theorem,\footnote{The Kallianpur-Striebel formula \reff{filter} actually holds under weaker conditions, see \cite{KS, KS2}. However, under those conditions we don't have the equivalence between strong formulation and weak formulation. To avoid the possible confusion, we require the conditions for Girsanov theorem here.}  we need to truncate $\th^*$ first: 
\bea
\label{thR}
\th^R(v; t, Q_{[0,t]}) = \th^{*}(v; t, Q_{[0,t]}) \1_{\{\t_R \ge t\}}, ~\mbox{where}~ \t_R := \inf\{t\ge 0: |Q_t|\ge R\} \wedge (1-R^{-1}),
\eea
for any $R>1$. Then $\th^R$ is bounded and thus one can apply the Girsanov theorem. It is clear that $M^{\th^{R,v}}_t = M^{\th^{*v}}_{\t_R\wedge t}$. Then \reff{vM=BM} implies
$$
 \int_\dbR v M^{\th^{R,v}}_t \mu(dv)  =  B_{\t_R \wedge t} \int_\dbR  M^{\th^{R,v}}_t \mu(dv),
$$ 
and thus, by \reff{filter}, 
\bea
\label{PthRP*}
P^{\th^R}_t = B_{\t_R \wedge t} = Q_{\t_R \wedge t}= P^*_{\t_R \wedge t}.
\eea

To send $R\to \infty$, we change back to strong formulation \reff{Qth} and \reff{Pth0}. Denote 
\beaa
\hat Q_t := Q^{\th^{*V}}_t = Vt - (1-t)\int_0^t {dB_r\over 1-r},\q \hat \t_R := \t_R(\hat Q).
\eeaa
Then, for SDE \reff{Qth} with $\th'= \th^R(V,\cd)$,  $Q^{\th^R(V,\cd)}_t = \hat Q_{\hat\t_R \wedge t}$. Note that, under strong formulation \reff{PthRP*}, reads as: $P^{\th^R}(t, \hat Q_{[0,\hat \t_R \wedge t]}) = \hat Q_{\hat\t_R\wedge t}$. Thus, by \reff{Pth0}  we have
\beaa
\dbE\big[V \big|\cF^{\hat Q}_{\hat\t_R\wedge t}\big] = \hat Q_{\hat\t_R\wedge t}.
\eeaa
By \reff{thR} it is clear that $\hat\t_R\to 1$ as $R\to \infty$. Then, by sending $R\to \infty$ at above,
$\dbE\big[V \big|\cF^{\hat Q}_t\big] = \hat Q_t$, $t<1$.
That is, $P^{\th^*}(Q^{\th^{*V}}) = Q^{\th^{*V}}_t$. This exactly means $P^{\th^*} = P^*$.
\qed

\bs
\no{\bf Proof of Remark \ref{rem-cA} (iii).} Again assume $P^*(t, Q_{[0,t]}) = Q_t$, however, for each $v\in Supp(V)$, we shall consider $Q^{\th', \b}_t = \int_0^t \th'_s ds + \int_0^t \b_s dB_s$, where both $\th'$ and $\b$ are the insider's controls. Following the arguments in \cite[Section 1]{Back1992}, one may argue that in this case the reasonable expected value of the insider is:
\beaa
J_0(P^*; v, \th', \b) := \dbE\Big[\int_0^T (v- Q^{\th', \b}_t) d Q^{\th',\b}_t\Big].
\eeaa
 Similarly to \reff{bridge3},  by applying the It\^{o} formula one can easily see that
\beaa
J_0(P^*; v, \th', \b)=  \dbE\Big[v Q^{\th'}_T - {|Q^{\th'}_T|^2 - \int_0^T |\b_t|^2dt\over 2}\Big].
\eeaa
Then, unless $\b$ is required to be uniformly bounded, we have
\beaa
\sup_{\th', \b} J_0(P^*; v, \th', \b) = {v^2+1 \over 2} + {1\over 2}\sup_\b \dbE\Big[\int_0^T |\b_t|^2dt\Big] = \infty.
\eeaa
This implies that in this case $P^*$ cannot be an equilibrium. 
\qed 

\bs

\no{\bf Some discussions concerning Remark \ref{rem-Markov} (ii).} Here we argue heuristically that typically $P^*_t$ is not Markovian under $\dbP$ when $N\ge 3$. Assume for simplicity that $N=3$ and $V$ is uniform on $Supp(V) = \{-1, 0, 1\}$. Use the notation $X^v$, $v\in Supp(V)$,  in the obvious sense, then $S_t = P^*_t = X^1_t - X^{-1}_t$. By Proposition \ref{prop-MarkovianX}, we have $Z^i_t = \sum_{j=-1,0,1} \pa_{x_j} u_i(X^1_t, X^0_t, X^{-1}_t) X^{j}_s\big[\pa_z H(s, \hat Z^j_t) - \bar X_t]$.  Note that $X^1+X^0+X^{-1}=1$. Denote $\G_t := X^1_t + X^{-1}_t$. Then we have 
\beaa
X^1_t = {1\over 2}[\G_t+S_t],\q X^0_t = 1- \G_t,\q X^{-1}_t = {1\over 2}[\G_t - S_t].
\eeaa
Then we can easily derive a two dimensional SDE for $(S, \G)$:
\bea
\label{SG}
\left.\ba{lll}
\dis  S_t = \int_0^t b_1(s, S_s, \G_s) ds + \int_0^t  \si_1(s, S_s, \G_s) dB_s;\\
\dis \G_t = {2\over 3}  + \int_0^t b_2(s, S_s, \G_s) ds + \int_0^t  \si_2(s, S_s, \G_s) dB_s;
\ea\right.
\eea
where $b_1, \si_1, b_2, \si_2$ are some deterministic functions. For general $H$, typically $b_1$ and/or $\si_1$ would depend on $\G$, then $S$ would not be Markovian. For example, consider a very simple case (which is not derived from \reff{FBSDE}): $b_1 = \G-{2\over 3}$, $\si_1=1$, $b_2=0$, $\si_2=1$, then
\beaa
\G_t = {2\over 3} + B_t,\q S_t = \int_0^t B_s ds + B_t.
\eeaa
This $S$ is not Markovian under $\dbP$.
\qed

\bibliographystyle{siam}
\bibliography{refs1}

\begin{thebibliography}{10}

\bibitem{Aase2012}
{\sc K.~K. Aase, T.~Bjuland, and B.~{\O}ksendal}, {\em Strategic insider trading equilibrium: a filter theory approach}, Afrika Matematika, 23 (2012), pp.~145--162.

\bibitem{Back1992}
{\sc K.~Back}, {\em Insider trading in continuous time}, The Review of Financial Studies, 5 (1992), pp.~387--409.

\bibitem{Back2004}
{\sc K.~Back and S.~Baruch}, {\em Information in securities markets: Kyle meets \uppercase{G}losten and \uppercase{M}ilgrom}, Econometrica, 72 (2004), pp.~433--465.

\bibitem{back2021}
{\sc K.~Back, F.~Cocquemas, I.~Ekren, and A.~Lioui}, {\em Optimal transport and risk aversion in \uppercase{K}yle's model of informed trading}, Preprint, arXiv: 2006.09518,  (2020).

\bibitem{BackPedersen}
{\sc K.~Back and H.~Pedersen}, {\em Long-lived information and intraday patterns}, Journal of Financial Markets, 1 (1998), pp.~385--402.

\bibitem{BHMO}
{\sc F.~Biagini, Y.~Hu, T.~Meyer-Brandis, and B.~{\O}ksendal}, {\em Insider trading equilibrium in a market with memory}, Mathematics and Financial Economics, 6 (2012), pp.~229--247.

\bibitem{Bose2023}
{\sc S.~Bose and I.~Ekren}, {\em {\uppercase{K}yle--\uppercase{B}ack models with risk aversion and non-\uppercase{G}aussian beliefs}}, The Annals of Applied Probability, 33 (2023), pp.~4238 -- 4271.

\bibitem{Bose2024}
{\sc S.~Bose and I.~Ekren}, {\em Multidimensional \uppercase{K}yle--\uppercase{B}ack model with a risk averse informed trader}, SIAM Journal on Financial Mathematics, 15 (2024), pp.~93--120.

\bibitem{Caldentey2010}
{\sc R.~Caldentey and E.~Stacchetti}, {\em Insider trading with a random deadline}, Econometrica, 78 (2010), pp.~245--283.

\bibitem{CAMPI2011}
{\sc L.~Campi, U.~{\c C}etin, and A.~Danilova}, {\em Dynamic \uppercase{M}arkov bridges motivated by models of insider trading}, Stochastic Processes and their Applications, 121 (2011), pp.~534--567.

\bibitem{CDLL}
{\sc P.~Cardaliaguet, F.~Delarue, J.-M. Lasry, and P.-L. Lions}, {\em The Master Equation and the Convergence Problem in Mean Field Games}, AMS 201, Princeton University Press, 2019.

\bibitem{CD1}
{\sc R.~Carmona and F.~Delarue}, {\em Probabilistic Theory of Mean Field Games I -- Mean Field FBSDEs, Control, and Games}, Springer Cham, 2018.

\bibitem{Cetin2023}
{\sc U.~\c{C}etin}, {\em Insider trading with legal risk in continuous time}, Preprint, ssrn.4651481,  (2023).

\bibitem{Cetin2023b}
\leavevmode\vrule height 2pt depth -1.6pt width 23pt, {\em Insider trading with penalties, entropy and quadratic \uppercase{BSDE}s}, Preprint, arXiv: 2311.12743,  (2023).

\bibitem{Cetin2016}
{\sc U.~\c{C}etin and A.~Danilova}, {\em Markovian \uppercase{N}ash equilibrium in financial markets with asymmetric information and related forward–backward systems}, The Annals of Applied Probability, 26 (2016), pp.~1996--2029.

\bibitem{CD2}
\leavevmode\vrule height 2pt depth -1.6pt width 23pt, {\em Dynamic Markov Bridges and Market Microstructure: Theory and Applications}, Springer New York, 2018.

\bibitem{CetinXing2013}
{\sc U.~\c{C}etin and H.~Xing}, {\em {Point process bridges and weak convergence of insider trading models}}, Electronic Journal of Probability, 18 (2013), pp.~1 -- 24.

\bibitem{CCD}
{\sc J.-F. Chassagneux, D.~Crisan, and F.~Delarue}, {\em A Probabilistic Approach to Classical Solutions of the Master Equation for Large Population Equilibria}, vol.~280, Memoirs of the American Mathematical Society, 2022.

\bibitem{Cho2003}
{\sc K.-H. Cho}, {\em Continuous auctions and insider trading: uniqueness and risk aversion}, Finance and Stochastics, 7 (2003), pp.~47--71.

\bibitem{Kasper2023}
{\sc J.~H. Choi, H.~Kwon, and K.~Larsen}, {\em Trading constraints in continuous-time \uppercase{K}yle models}, SIAM Journal on Control and Optimization, 61 (2023), pp.~1494--1512.

\bibitem{CollinFos2016}
{\sc P.~Collin-Dufresne and V.~Fos}, {\em Insider trading, stochastic liquidity, and equilibrium prices}, Econometrica, 84 (2016), pp.~1441--1475.

\bibitem{Danilova2010}
{\sc A.~Danilova}, {\em Stock market insider trading in continuous time with imperfect dynamic information}, Stochastics, 82 (2010), pp.~111--131.

\bibitem{ekren2022}
{\sc I.~Ekren, B.~Mostowski, and G.~Žitković}, {\em Kyle's model with stochastic liquidity}, Finance and Stochastics, accepted,  (2025).

\bibitem{FRZ2021}
{\sc Z.~Feinstein, B.~Rudloff, and J.~Zhang}, {\em Dynamic set values for nonzero-sum games with multiple equilibriums}, Mathematics of Operations Research, 47 (2022), pp.~616--642.

\bibitem{GMMZ}
{\sc W.~Gangbo, A.~R. M{\'e}sz{\'a}ros, C.~Mou, and J.~Zhang}, {\em {Mean field games master equations with nonseparable \uppercase{H}amiltonians and displacement monotonicity}}, The Annals of Probability, 50 (2022), pp.~2178 -- 2217.

\bibitem{IseriZhang}
{\sc M.~Iseri and J.~Zhang}, {\em Set values for mean field games}, Trans. Amer. Math. Soc., 377 (2024), pp.~7117--7174.

\bibitem{KS}
{\sc G.~Kallianpur and C.~Striebel}, {\em {Estimation of Stochastic Systems: Arbitrary System Process with Additive White Noise Observation Errors}}, The Annals of Mathematical Statistics, 39 (1968), pp.~785 -- 801.

\bibitem{KS2}
{\sc G.~Kallianpur and C.~Striebel}, {\em Stochastic differential equations occurring in the estimation of continuous parameter stochastic processes}, Theory of Probability \& Its Applications, 14 (1969), pp.~567--594.

\bibitem{Kyle1985}
{\sc A.~S. Kyle}, {\em Continuous auctions and insider trading}, Econometrica, 53 (1985), pp.~1315--1335.

\bibitem{Lasserre2004}
{\sc G.~Lasserre}, {\em Asymmetric information and imperfect competition in a continuous time multivariate security model}, Finance and Stochastics, 8 (2004), pp.~285--309.

\bibitem{MSZ}
{\sc J.~Ma, R.~Sun, and Y.~Zhou}, {\em Kyle--\uppercase{B}ack equilibrium models and linear conditional mean-field \uppercase{SDE}s}, SIAM Journal on Control and Optimization, 56 (2018), pp.~1154--1180.

\bibitem{MaTan2022}
{\sc J.~Ma and Y.~Tan}, {\em A generalized \uppercase{K}yle-\uppercase{B}ack strategic insider trading model with dynamic information}, Preprint, arXiv: 2204.13137,  (2022).

\bibitem{MaXiaZhang}
{\sc J.~Ma, W.~Xia, and J.~Zhang}, {\em Wealth or stealth? the camouflage effect in insider trading}, Working Paper,  (2025).

\bibitem{MaYong}
{\sc J.~Ma and J.~Yong}, {\em Solvability of forward-backward \uppercase{SDE}s and the nodal set of \uppercase{H}amilton-\uppercase{J}acobi-\uppercase{B}ellman equations}, Chin. Ann. Math. Ser. B, 16 (1995), pp.~279--298.

\bibitem{MaZhang}
{\sc J.~Ma and J.~Zhang}, {\em On weak solutions of forward--backward \uppercase{SDE}s}, Probability Theory and Related Fields, 151 (2011), pp.~475--507.

\bibitem{MaZhangZheng}
{\sc J.~Ma, J.~Zhang, and Z.~Zheng}, {\em Weak solutions for forward-backward \uppercase{SDE}s: A martingale problem approach}, The Annals of Probability, 36 (2008), pp.~2092--2125.

\bibitem{ST}
{\sc H.~M. Soner and N.~Touzi}, {\em Dynamic programming for stochastic target problems and geometric flows}, Journal of the European Mathematical Society, 4 (2002), pp.~201--236.

\bibitem{WangZhang}
{\sc H.~Wang and J.~Zhang}, {\em Forward backward \uppercase{SDE}s in weak formulation}, Mathematical Control and Related Fields, 8 (2018), pp.~1021--1049.

\bibitem{Zhang2017}
{\sc J.~Zhang}, {\em Backward Stochastic Differential Equations: From Linear to Fully Nonlinear Theory}, Springer New York, 2017.

\end{thebibliography}

%\printbibliography
\end{document}